\newtheorem{defi}{{\bf Definition}}[section]
\newtheorem{thr}[defi]{{\bf Theorem}}
\newtheorem{pro}[defi]{{\bf Proposition}}
\newtheorem{coro}[defi]{{\bf Corollary}}
\newtheorem{lem}[defi]{{\bf Lemma}}
\newcommand{\proof}{\noindent \textbf{Proof:~~}}
\newcommand{\Q}{\mathbb{Q}} 
\newcommand{\N}{\mathbb{N}} 
\newcommand{\base}{\mathcal{B}}   
\newcommand{\seq}{\N^{<\omega}}
\newcommand{\fin}{\hfill$\blacksquare$} 
\newcommand{\up}{\upharpoonright} 
\newcommand{\su}{\subseteq} 
\def\cantor{2^{\N}}
\def\FIN{\sf FIN}
\def\fsigdel{F_{\sigma\delta}}
\title{Fr\'{e}chet Borel ideals with Borel orthogonal}
\author{Francisco Guevara\thanks{Partially supported by the CDCHTA grant \# SE-C-03-12-05 from Universidad de Los Andes, M\'erida, Venezuela.}  $\,$  and Carlos Uzc\'{a}tegui
}
\date{\today}
\begin{document}

\maketitle

\begin{abstract}
We study Borel ideals $I$ on $\N$ with the Fr\'{e}chet property such that
its orthogonal $I^\perp$ is also Borel (where $A\in I^\perp$ iff
$A\cap B$ is finite for all $B\in I$ and $I$ is Fr\'{e}chet if
$I=I^{\perp\perp}$). Let $\base$ be the smallest collection of
ideals on $\N$ containing the ideal of finite sets and closed
under countable direct sums and orthogonal. All ideals in $\base$
are Fr\'{e}chet, Borel and have Borel orthogonal. We show that
$\base$ has exactly $\aleph_1$ non isomorphic members. The family
$\base$ can be characterized as the collection of all Borel ideals
which are isomorphic to an ideal of the form $I_{wf}\!\!\up A$,
where $I_{wf}$ is the ideal on $\N^{<\omega}$ generated by the
wellfounded trees. Also, we show that $A\su \Q$ is scattered iff
$WO(\Q)\!\up A$ is isomorphic to an ideal in $\base$, where
$WO(\Q)$ is the ideal of well founded subsets of $\Q$. We use the
ideals in $\base$  to construct  $\aleph_1$ pairwise non homeomorphic
countable sequential spaces whose topology is analytic.
\end{abstract}

{\em Keywords:} Borel ideals, Fr\'{e}chet
property, scattered sets, analytic sequential spaces.

Subjclass[2000] {Primary and Secondary: 03E15, 03E05}.

\section{Introduction}\label{section1}
Given a collection $\cal A$ of subsets of $\N$, the orthogonal of
$\cal A$ is the following family of sets
\[
{\cal A}^{\perp}=\{B\su \N : (\forall A\in {\cal A})(A\cap B
\textrm{ is finite}) \}.
\]
In this paper we study some structural properties of  pairs
$(I,I^\perp)$ (also called, a gap) where $I$ is an ideal of subsets
of $\N$, that is to say, $I$ is a non empty collection of subsets of
$\N$ closed under finite unions and taking subsets of its elements.
The motivation of our work comes from some  results about definable
pairs of orthogonal families. The word definable refers to the descriptive
complexity of the family as a subset of the Cantor cube $2^\N$ (by the
usual identification of a subset of $\N$ with its characteristic function).
It is in that sense that we talk about Borel, analytic or co-analytic ideals. For
instance, if $I$ is Borel (or analytic), then $I^\perp$ is (at
most) co-analytic.

There has been growing interest in the study of pairs $({\cal
A},{\cal B})$ of orthogonal families (i.e, ${\cal A}\su {\cal
B}^\perp$) because of its natural connection with gaps in the
quotient algebra ${\cal P}(\N)/\mbox{Fin}$ and its applications to
problems in analysis and topology (see
\cite{AvilesStevo2011,AvilesStevo2012,AvilesStevo2013,DK,To, Todor99,TU}
and the references therein).  For instance, let $K$  be a  separable compact subset of Baire class 1 functions on a Polish space (also called a (separable) Rosenthal compactum). Let $(f_n)_n$ be a dense subset of $K$ which accumulates to a function $f$ in $K$. Consider the ideal $I_f$ on $\N$  given by $A\in I_f$, if $f$ belongs to  the closure of $\{f_n: \: n\in A\}$.  From the work of
 Krawczyk \cite{Krawczyk92} and Todor\v{c}evi\'{c} \cite{Todor99,Todor2010} we know that several interesting
 topological properties of  $K$ are equivalent to structural or combinatorial properties of the ideal $I_f$.   For example, Todor\v{c}evi\'{c} \cite[Corollary 7.52]{Todor99} showed that $I_f$ is a selective ideal  (see the definition in \S \ref{preli}),  which is the combinatorial counterpart of the fact that $K$ is bisequential.
  In addition, Todor\v{c}evi\'{c} and Avil\'{e}s in \cite{AvilesStevo2015} gave a description of some special classes of separable Rosenthal compacta  in terms of  a combinatorial structure called  strong $n$-gaps \cite{AvilesStevo2012}, which is a generalization of pairs of the form $(I, I^\perp)$.  In a more general setting,   Krawczyk \cite{Krawczyk92} and Todor\v{c}evi\'{c}  \cite{Todor99,Todor2010} have shown that    if $I$ is a selective analytic ideal
not countably generated, then $I^\perp$ is a complete co-analytic
set (see also  \cite{DK}). Another example of the type of results that  motivates our work is  a theorem of  Todor\v{c}evi\'{c} that says that  an analytic
$p$-ideal $I$ is countably generated iff $I^\perp$ is Borel \cite[Theorem 7]{To}.

An ideal $I$ is  said to be {\em Fr\'{e}chet} if $I=I^{\perp\perp}$.
We will recall later the connection of this definition with the
more familiar notion of a Fr\'{e}chet topological  space. Mathias
\cite{Mathias77} showed that every selective analytic ideal is
Fr\'{e}chet (see also \cite[Theorem 7.53]{Todor2010}). But the
converse is not true. So our initial motivation was to study
Fr\'{e}chet ideals such that $I$ and $I^\perp$ are both analytic. Since sets that are analytic and co-analytic are Borel, we will study Fr\'{e}chet Borel ideals with Borel orthogonal, that is, Borel gaps \cite{To}. For example,  a countably generated ideal
satisfies these conditions. The next observation is that a
countable direct sum $\oplus_n I_n$ of Fr\'{e}chet Borel ideals is
also Borel and Fr\'{e}chet, moreover, its orthogonal $(\oplus_n
I_n)^\perp$ is also Borel. Our first result is the following

\medskip \noindent {\bf Theorem I.} {\em The smallest collection
$\base$ of ideals on $\N$ containing the ideal of finite sets and
closed under countable direct sums and the operation of taking
orthogonal has exactly $\aleph_1$ non isomorphic Fr\'{e}chet ideals.
Moreover, all ideals in $\base$ have complexity
$F_{\sigma\delta}$.}

\medskip

We will define a sequence $P_\alpha$ for $\alpha<\omega_1$ of
ideals such that an ideal belongs to $\base$ iff it is isomorphic
to one of the following: $P_\alpha$, $P_\alpha^\perp$ or
$P_\alpha\oplus P_\alpha^\perp$. To give an example, let's denote by
$\FIN$ the ideal of finite sets and by $I^\omega$ the countable
direct sum of copies of an ideal $I$ (note this is not the usual Fubini power of ideals). Then $P_0={\cal P}(\N)$,
$P_1= \FIN^\omega$ and $P_2=((\FIN^{\omega})^{\perp})^{\omega}$. $P_2$ is
the simplest example of a Fr\'{e}chet ideal $J$ such that $J$ and
$J^\perp$ are non isomorphic Borel ideals and neither one is
countably generated. We should also mention that the ideals in
$\base$ are the only examples we know of Fr\'{e}chet Borel ideals
with Borel orthogonal.

The problem of constructing Fr\'{e}chet ideals with special
properties or uncountable families of pairwise non isomorphic
Fr\'{e}chet ideals on $\N$ has been addressed  in the literature
\cite{GU2009,Garcia-Rivera2013,si98,Simon2008,TU}. Those
constructions usually make use  of almost disjoint families of
size the continuum and, in general, the filters produced are not
definable or at least non Borel. For instance, a typical Fr\'{e}chet
ideal is given by ${\mathcal A}^\perp$ where $\cal A$ is an almost
disjoint family of infinite subsets of $\N$. When $\cal A$ is
analytic,  Mathias \cite{Mathias77} showed that the ideal
generated by ${\cal A}$ is selective and by the results of
Todor\v{c}evi\'{c} \cite[Theorem 7.53]{Todor2010} and  Krawczyk
\cite{Krawczyk92}, $\cal A^\perp$ is Borel only if $\cal A$ is
countable (see  also \cite{DK}). In contrast with this, the
collection $\base$ consists of Fr\'{e}chet Borel ideals.

It is known that a Fr\'{e}chet analytic  ideal fails to be selective
iff some restriction of it is isomorphic to $P_1=\FIN^\omega$
\cite[Corollary 4.5]{TU}. Moreover, if $I$ is a selective ideal
and $I\!\!\upharpoonright \!A$ is isomorphic to an ideal in
$\base$, then $I\!\!\upharpoonright \!A$ is isomorphic to either
${\cal P}(\N)$, $\FIN$ or  $(\FIN^{\omega})^\perp$ (these three
ideals and their direct finite sums are the only selective ideals
in $\base$). Thus it seems natural to investigate, for a given
Fr\'{e}chet ideal $I$, which ideals in $\base$ appear as a
restriction of $I$.  To illustrate further this idea we need to
recall the definition of two well known ideals.

Let's denote by $I_{wf}$ the ideal on $\N^{<\omega}$ (the collection of
finite sequences of integers) generated by the well founded trees
on $\N$. $I_{wf}$ is a complete co-analytic subset of
$2^{\N^{<\omega}}.$ In \cite{DK,Krawczyk92} it was made clear the important role played by
$I_{wf}$ in the study of the descriptive complexity of orthogonal
families. We will show the following.

\medskip

\noindent {\bf Theorem II.} {\em Every member of $\mathcal{B}$ is
isomorphic to some restriction of $I_{wf}$.}

\medskip

In view of the last result, it is natural to investigate which restriction of an ideal belongs to $\base$. The following theorem seems to indicate that the collection $\base$ could play a critical role for  studying the complexity of Fr\'{e}chet  ideals.

\medskip

\noindent {\bf Theorem III.} {\em Let $I$ be either $I_{wf}$,   $WO(\Q)$ or $J^\perp$ with $J$ an analytic selective ideal.   The
following are equivalent.
\begin{itemize}
\item[(i)] $I\!\!\upharpoonright \!A$ is isomorphic to an
ideal in $\mathcal{B}$.
\item[(ii)] $I\!\!\upharpoonright
\!A$ is Borel.
\end{itemize}
}


There is an analogy between the collection $\base$ and the
hierarchy of countable scattered linear orders given by the
classical Hausdorff's theorem. In fact,  we show that $A\su \Q$ is scattered iff $WO(\Q)\!\!\upharpoonright \!A$ is isomorphic to an
ideal in $\mathcal{B}$.  However, the ideals $WO(\Q)$ and
$I_{wf}$ are structurally very different, since $WO(\Q)$ is
isomorphic to its orthogonal and this is not true for $I_{wf}$.

\medskip

We would like to recall the reason for calling Fr\'{e}chet
an ideal $I$ such that $I=I^{\perp\perp}$. To each ideal $I$ on
$\N$ we associate a topology on $X=\N\cup\{\infty\}$ where each
$n\in\N$ is isolated and  the neighborhoods of $\infty$ are all sets of
the form $V\cup\{\infty\}$ with $V$ in the dual filter of $I$.
A topological space $Z$ is said to be Fr\'{e}chet, if
whenever $A\su Z$ and $z\in \overline{A}$, there is a sequence
$(z_n)_n$ in $A$ converging to $z$. It is easy to verify that $X$,
with the topology defined above, is Fr\'{e}chet iff $I$ is
Fr\'{e}chet: Just notice that a sequence $S\su \N$ converges to
$\infty$ iff $S\in I^\perp$.

When both the topology of a space $X$ (in the example above, given
by the ideal $I$) and the convergence relation (given by
$I^\perp$) are Borel, we could say that the space $X$  is
definable in a strong sense. The result of Krawczyk about
Rosenthal compacta \cite{Krawczyk92} says that this is not the
case when the compactum is not first countable. Nevertheless, Debs
\cite{Debs87,Debs2009} has shown that there is a Borel set that
codes the convergence relation in a Rosenthal compactum. In the
last section we shall see how the ideals in $\base$ can be used to
construct a family of size $\aleph_1$
of pairwise non homeomorphic countable sequential spaces such that  both the
topology and the convergence relation are Borel and yet all of them
have sequential order $\omega_1$. The spaces we construct are homeomorphic
to some subspaces of $C_p(\N^\N)$. This will answer a question posed in \cite{TU}.

\bigskip

Finally, we would like to state some  questions left open in this paper. The main issue  is whether a Fr\'{e}chet ideal $I$ such that  $I$ and  $I^\perp$ are both  Borel is necessarily isomorphic to an ideal in $\base$, that is, whether
$\base$ is a complete list of Borel gaps.  If that is not the case, are all Borel Fr\'{e}chet ideals with Borel orthogonal $\fsigdel$? Of bounded complexity?  Are there continuum many non-isomorphic such ideals?

\section{Preliminaries and notation}\label{section2}
\label{preli}

The set $\N^{<\omega}$ will denote the set of finite sequences of
integers and $|s|$ denotes the length of the sequence $s\in
\N^{<\omega}$. The set $\N^{\omega}$ will denote the set of
infinite sequences of integers. We will say that a sequence $s\in
\N^{<\omega}$ extends a sequence $t\in \N^{<\omega}$, denote
$t\preceq s$,  if for all $i<|t|$ we have that $s(i)=t(i)$.   A
{\em tree} is a collection of sequences downward closed under
$\preceq$. Given a finite sequence $t$ and an integer $n$ we
denote the sequence $\langle t(0),\dots,t(|t|-1),n \rangle$ by $t
^{\frown} n$. If $A$ is a subset of $\N^{<\omega}$, $\langle A
\rangle $ denotes the tree generated $A$, whereas $A_{t}$ denotes
the set $\{s\in A : t\preceq s\}$  and
$\mathcal{N}_{t}=\{s\in \N^{<\omega}: t\preceq s\}$ for each $t\in \N^{<\omega}.$

An {\em ideal} over a  set $X$ is a family $I$ of subsets of $X$
that contains the empty set, it is closed under taking subsets and
finite unions. For convenience, we will allow the trivial ideal
${\cal P}(X)$ (i.e. when $X\in I$).  We will suppose that an ideal over $X$ contains every finite subset of $X$.
$\FIN$ denotes  the ideal of finite subsets of $\N$ and $\N^{[\infty]}$ the family of infinite
subsets of $\N$. An
ideal  $I$ is {\em selective} \cite{Mathias77} if for all decreasing
sequence of sets $A_n\not\in I$ for $n\in\N$,
 there is $B\not\in I$ such that $B\setminus
\{0,\cdots, n-1\}\subseteq A_n$ for all $n\in B$.

An ideal $I$ on $X$ is isomorphic to an ideal $J$ on $Y$ if there
is a bijection $f: X \rightarrow Y$ such that $A\in I$ iff
$f[A]\in J$; this will be denoted by $I\cong J$.  If $f$ is just
an injection, we will write  $I\hookrightarrow J$ and say that $J$
has a copy of $I$ and  we will write
$I\stackrel{f}{\hookrightarrow} J$ to denote that $f$ is an
isomorphic embedding that witness $I\hookrightarrow J$. If $K$ is
a subset of $X$, the restriction of $I$ to $K$, denoted by $I\up
K$, is the ideal on $K$ consisting of all the subsets of $K$
belonging to $I$.

A subset of a Polish space is called  {\em analytic} if it is
a continuous image of a Borel subset of a Polish space. It is
called {\em co-analytic} when its complement is analytic. By the
usual identification of subsets with characteristic functions, we
can identify an ideal on $\N$ with a subset of the Cantor cube
$\cantor$ and thus it makes sense to say that an ideal is Borel,
analytic, co-analytic, etc.

Let $\cal A$ be a collection of subsets of $X$, the orthogonal
$\cal A^\perp$ of $\cal A$ was defined in the introduction. This
terminology is taken from \cite{To}.  Two families of sets $\cal
A$ and $\cal B$ are orthogonal if ${\cal A}\subseteq {\cal
B}^\perp$. It is easy to verify that
$I^{\perp}=I^{\perp\perp\perp}$ and $I\subseteq I^{\perp\perp}$.
An ideal $I$ has the {\em Fr\'{e}chet property} or just is a {\em
Fr\'{e}chet} ideal, if $I=I^{\perp\perp}$.

Let $\{K_n:\; n\in F\}$ be a partition of $X$, where $F\su \N$.
For $n\in F$, let $I_n$ be an ideal on $K_n$. The direct sum,
denoted by $\bigoplus\limits_{n\in F} I_n$, is defined  by
\[
A\in \bigoplus_{n\in F} I_n \Leftrightarrow (\forall n\in F)(A\cap K_n \in I_n).
\]
In general, given a sequence of ideals $I_n$ over a countable set
$X_n$, we define $\oplus_n I_n$ by taking a partition $\{K_n:\;
n\in \N\}$ of $\N$ and an isomorphic  copy $I_n'$ of $I_n$ on
$K_n$ and let $\oplus_n I_n$ be $\oplus_n I_n'$. It should be
clear that  $\oplus_n I_n$ is,  up to isomorphism, independent of
the partition and the copy used. If all $I_n$ are equal to $I$ we
will write $I^\omega$ instead of $\oplus_n I_n$.

For example, if we sum infinite many times the ideal $\FIN$ we get
$\FIN^\omega$ a well known ideal, sometimes denoted by
$\emptyset\times \FIN$. Its orthogonal $\FIN^{\omega\perp}$,
 sometimes is denoted by $\FIN\times
\emptyset$. Those ideals  play a crucial role for the general
study of analytic ideals (\cite{Farah2000, Solecki1999}).
Moreover, the topological space associate to $\FIN^\omega$ (as
explained in the introduction) is the sequential fan, which is the
prototypical example of a non first countable  Fr\'{e}chet space.

We present some basic facts about $\oplus$ that will be used in the sequel.

\begin{lem}
\label{properties_sumas_directas} Let $I$, $J$ and $K$ be ideals.
\begin{enumerate}
\item[(i)] $I\oplus J = J\oplus I$.

\item[(ii)] $(I\oplus J)\oplus K\cong I\oplus (J\oplus K)$.

\item[(iii)] Parts (i) and (ii) also hold for infinite sums.

\item[(iv)] $(I\oplus J)^\perp\cong I^\perp\oplus J^\perp$.

\end{enumerate}
\end{lem}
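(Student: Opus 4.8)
The plan is to read everything off the defining equivalence $A \in \bigoplus_{n} I_n \iff (\forall n)(A \cap K_n \in I_n)$, i.e. membership is a conjunction of block conditions indexed by $n$. Since a conjunction is insensitive to the order of (or any bijective reindexing of) its index set, part (i) and the commutativity half of (iii) are immediate: $I \oplus J$ and $J \oplus I$ are literally the same family on the same partition $\{K_0, K_1\}$, and permuting the blocks of a countable sum changes nothing.

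For associativity (ii) I would unravel the two nested partitions and observe that both $(I \oplus J) \oplus K$ and $I \oplus (J \oplus K)$ are, up to the natural bijection of iterated disjoint unions, the ideal on $X_0 \sqcup X_1 \sqcup X_2$ (carrying $I$, $J$, $K$) whose members are exactly the $A$ with $A \cap X_0 \in I$, $A \cap X_1 \in J$ and $A \cap X_2 \in K$; that bijection is then the required isomorphism. The associativity half of (iii) is the same argument applied to a regrouping $\N = \bigsqcup_m A_m$ of the index set, using that $A$ satisfies the block condition at every $n$ iff it does so at every $n \in A_m$ for every $m$.

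The only part carrying content is (iv), so that is where I would concentrate. Writing $I$ on $X_0$, $J$ on $X_1$, $X = X_0 \sqcup X_1$, and computing each orthogonal relative to its ambient block, I would show $(I \oplus J)^\perp$ and $I^\perp \oplus J^\perp$ coincide as families on $X$ (whence the isomorphism is essentially the identity, modulo the harmless repartitioning built into $\oplus$). For $\subseteq$, given $B$ orthogonal to $I \oplus J$ and any $A \in I$, note $A \in I \oplus J$ since $A \cap X_1 = \emptyset \in J$, so $A \cap B = A \cap (B \cap X_0)$ is finite; as $A$ ranges over $I$ this gives $B \cap X_0 \in I^\perp$, and symmetrically $B \cap X_1 \in J^\perp$. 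For $\supseteq$, given $B \cap X_0 \in I^\perp$ and $B \cap X_1 \in J^\perp$ and any $A \in I \oplus J$, I would split $A \cap B = ((A \cap X_0) \cap (B \cap X_0)) \cup ((A \cap X_1) \cap (B \cap X_1))$ using disjointness of $X_0, X_1$; each piece is finite by orthogonality of the corresponding restrictions, so $A \cap B$ is finite.

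The main (and still mild) obstacle is the bookkeeping in (iv): one must compute $I^\perp$ relative to the block $X_0$ rather than all of $\N$, and it is precisely the disjointness $X = X_0 \sqcup X_1$ that powers both the ``pad with $\emptyset$'' step in the forward direction and the splitting of $A \cap B$ in the backward direction. Everything else is a direct transcription of the definitions, which is why the lemma is fairly called straightforward.
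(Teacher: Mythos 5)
Your proof is correct, and since the paper simply declares this lemma straightforward and omits the argument, your direct verification from the definitions (in particular the two-inclusion computation for part (iv), with the orthogonals taken relative to the blocks $X_0$ and $X_1$) is exactly the routine argument the authors intended. Nothing is missing.
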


\begin{lem}
\label{lemma1}
Let $\{K_n:\; n\in\N\}$ be a partition of a countable set $X$ with each $K_n$ infinite.  Let $I_n$ be an ideal on $K_n$ for each $n\in \N$.

\begin{enumerate}

\item[(i)] If each  $I_n$ is Borel, then  $\oplus_n
I_n$ is also Borel.

\item[(ii)]
\[
\begin{array}{lcl}
              A\in (\oplus_n I_n)^{\perp} & \Leftrightarrow & (\exists k\in\N)(A\su \cup_{i\leq k} K_i \textrm{ and } (\forall i\leq k)\; A\cap K_i\in I_{i}^{\perp}) \\
                                          & \Leftrightarrow & [(\forall i\in\N)(A\cap K_i\in I^\perp_i)] \textrm{ and  } [(\exists n\in\N)(\forall i>n) (A\cap K_i=\emptyset)].
\end{array}
\]


\item[(iii)] If  $I_{n}^{\perp}$  is Borel for each $n$, then
$(\oplus_n I_n)^{\perp}$ is also Borel.

\item[(iv)] If $I_n$ is $F_{\sigma\delta}$ for all $n$, then so are $\oplus_n I_n$ and $(\oplus_n I_n)^{\perp}$,

\end{enumerate}

\end{lem}


\begin{lem}
\label{frechet}
If $I_n$ is Fr\'{e}chet for all $n$, then $\oplus_n I_n$ is Fr\'{e}chet.
\end{lem}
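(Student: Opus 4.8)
The plan is to exploit the general inclusion $\oplus_n I_n \su (\oplus_n I_n)^{\perp\perp}$ (noted in the Preliminaries) and reduce the problem to the reverse inclusion $(\oplus_n I_n)^{\perp\perp} \su \oplus_n I_n$. Write $J = \oplus_n I_n$ and let $\{K_n : n\in\N\}$ be the underlying partition, so that membership in $J$ is decided blockwise: $A \in J$ iff $A \cap K_n \in I_n$ for every $n$. It therefore suffices to fix an arbitrary $A \in J^{\perp\perp}$ and a single index $n$, and prove that $A \cap K_n \in I_n$.

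First I would invoke the hypothesis that $I_n$ is Fr\'echet, which gives $I_n = I_n^{\perp\perp}$; hence it is enough to check that $A \cap K_n \in I_n^{\perp\perp}$, that is, that $(A\cap K_n)\cap D$ is finite for every $D \in I_n^\perp$ (where $D \su K_n$). The key step is then to observe that every such $D$ already lies in $J^\perp$. This follows directly from the characterization of the orthogonal of a direct sum in Lemma~\ref{lemma1}(ii): taking $k = n$, we have $D \su K_n \su \bigcup_{i\le n} K_i$, while $D \cap K_n = D \in I_n^\perp$ and $D\cap K_i = \emptyset \in I_i^\perp$ for $i < n$, so the defining condition for $J^\perp$ is met.

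With $D \in J^\perp$ in hand, the assumption $A \in J^{\perp\perp}$ yields that $A \cap D$ is finite; and since $D\su K_n$ we have $A\cap D = (A\cap K_n)\cap D$. As $D$ ranged over all of $I_n^\perp$, this shows $A\cap K_n \in I_n^{\perp\perp} = I_n$, completing the blockwise check. Letting $n$ vary gives $A \in J$, as required.

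The step I expect to carry the real content is the embedding of $I_n^\perp$ into $J^\perp$ via Lemma~\ref{lemma1}(ii); everything else is the routine unwinding of the definitions of the Fr\'echet property and of the double orthogonal. I would not expect the argument to require any appeal to the Borelness of the summands, nor to the ``finitely many blocks'' clause of the orthogonal description: the Fr\'echet property of each $I_n$ transfers to the sum purely through this single observation.
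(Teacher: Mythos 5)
Your proof is correct and rests on the same key observation as the paper's, namely that any $D\su K_n$ with $D\in I_n^{\perp}$ already belongs to $(\oplus_n I_n)^{\perp}$. The paper phrases the argument contrapositively (if $A\notin\oplus_n I_n$, pick a bad block $K_{n_0}$ and use the Fr\'echet property of $I_{n_0}$ to extract an infinite $B\su A\cap K_{n_0}$ lying in $I_{n_0}^{\perp}\su(\oplus_n I_n)^{\perp}$), whereas you verify $(\oplus_n I_n)^{\perp\perp}\su\oplus_n I_n$ directly block by block; the two are the same argument up to contraposition.
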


\proof
Let $(K_n)_n$ be the partition of $\N$ that defines $\oplus_n I_n$.
Take an infinite set $A\su \N$ that is not in $\oplus_n I_n$. Then,
there is $n_0\in\N$ such that $A\cap K_{n_0}\notin I_{n_0}$.
Since $I_{n_0}$ is Fr\'{e}chet, there is an infinite set
$B\su A\cap K_{n_0}$ belonging to $I_{n_0}^{\perp}$. It is clear that  $B$ is also in
$(\oplus_n I_n)^{\perp}$.
\fin

\bigskip

Now we define two ideals that play an important role in our
results. Consider the ideal $I_{wf}$ generated by the well founded
trees on $\N$. We will call a set $A\su \N^{<\omega}$ well founded
if it belongs to $I_{wf}$, that is to say, if there is a
wellfounded tree $T$ such that $A\su T$. Obviously, this is
equivalent to say that the tree generated by $A$ is well founded.
The orthogonal of $I_{wf}$ is the ideal $I_{d}$ generated by the
finitely branching trees on $\N$, or equivalently, $I_d$
consists of  sets which are dominated by a branch:
\[
A\in I_d\;\Leftrightarrow \; \exists \alpha\in \N^\omega \forall
s\in A\forall i<|s| (s(i)\leq \alpha(i))
\]

The ideal $I_{wf}$ is a complete co-analytic set \cite{DK} while
the ideal $I_{d}$ is easily seen to be $F_{\sigma\delta}$.

\section{The family $\mathcal{B}$}\label{section3}

One of the main purposes of this work is to study the smallest
collection $\mathcal B$ of ideals on $\N$ containing $\FIN$  and closed
under the operation of taking countable sums and orthogonal.  In
this section we give a precise characterization of the ideals
belonging to $\mathcal B$ and in particular we show that $\mathcal
B$ has exactly $\aleph_1$ non isomorphic elements. All ideals in
$\mathcal{B}$ are Borel and Fr\'{e}chet; moreover, we will see later
that the  members of $\mathcal{B}$ have Borel complexity at most
$F_{\sigma\delta}$.

The members of $\base$ are, by definition, ideals on $\N$, but we will regard $\base$ as if it were closed under isomorphism, thus when we say that an ideal $I$ over a countable set $X$ belongs to $\base$, we actually mean that $I$ is isomorphic to an ideal in $\base$. To state our results we define by recursion a sequence of ideals $P_\alpha$ and $Q_\alpha$ for $\alpha<\omega_1$.
For every limit ordinal $\alpha<\omega_1$ we fix an increasing
sequence $(\upsilon_{n}^{\alpha})_n$ of ordinals  such that
$\sup_n(\upsilon_{n}^{\alpha})=\alpha$.

\begin{itemize}
\item[(i)] $P_0=\mathcal{P}(\mathbb{N})$ and $Q_0=P_{0}^{\perp}=\FIN$.
\item[(ii)] $P_{\alpha+1}=(P_{\alpha}^{\perp})^\omega$.
\item[(iii)]  $P_{\alpha}=\oplus_{n}P_{\upsilon_{n}^{\alpha}}^\perp$,  for $\alpha<\omega_1$ a limit ordinal.
\item[(iv)]  $Q_\alpha=P_\alpha^\perp$ for every $\alpha<\omega_1$.
\end{itemize}

The following result follows immediately from the definition of $P_\alpha$.

\begin{lem}
\label{particion-P}
For each $\alpha<\omega_1$ there is a partition of $(K_n)_n$ of $\N$ such that, letting  $I_n=P_{\alpha}\upharpoonright K_n$, we have:
\begin{itemize}

\item[(i)] If $\alpha=\beta+1$, then $P_{\alpha}= \oplus_n I_n$ and $I_n\cong Q_{\beta}$ for all $n$.
\item[(ii)] If $\alpha$ is a limit ordinal, then $P_\alpha=\oplus_n  I_n$, and $I_n\cong Q_{\upsilon_{n}^{\alpha}}$ for all $n$.
\end{itemize}
\fin
\end{lem}

It should be clear that the definition of $P_\alpha$, $\alpha<\omega_1$
is, up to isomorphism, independent of the partition used. On the
other hand, we will show below that for $\alpha$ limit it is also
independent of the sequence $(\upsilon_n^\alpha)_n$.

For instance, a standard copy of $P_1=\FIN^\omega$ is defined on
$\N^2$ as follows: $A\in P_1$ iff $\{m\in\N:\; (n,m)\in A\}$ is
finite for all $n\in \N$. Therefore $A\in P_1^\perp$ iff there is
$n$ such that $A\su \cup_{k=0}^n \{k\}\times\N$.

Our first result about  $\mathcal{B}$ is the following.

\begin{thr}
\label{theorem1}  Every ideal in $\mathcal{B}$ is isomorphic to
either $P_{\alpha}$, $Q_{\alpha}$ or $P_{\alpha}\oplus
Q_{\alpha}$ for some $\alpha<\omega_1$.
\end{thr}

The proof consists in showing that the collection of all ideals isomorphic to either $P_\alpha$, $Q_\alpha$ or $P_{\alpha}\oplus Q_{\alpha}$ (for some $\alpha<\omega_1$) is closed under finite or countable direct sums and orthogonal.

\bigskip

\begin{lem}
\label{idempotencia}
\begin{itemize}

\item[(i)] $P_\alpha\oplus P_\beta\cong P_\alpha$, if $\beta\leq
\alpha$.
\item[(ii)] $Q_\alpha\oplus Q_\beta\cong Q_\alpha$, if $\beta\leq
\alpha$.
\item[(iii)] $P_\alpha\oplus Q_\beta\cong P_\alpha$, if $\beta<
\alpha$.
\item[(iv)] $Q_\alpha\oplus P_\beta\cong Q_\alpha$, if $\beta<
\alpha$.
\end{itemize}
\end{lem}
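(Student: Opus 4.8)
The plan is to prove all four statements by a single transfinite induction on $\alpha$, after first collapsing the four into two by a duality argument. First I would record the reduction. Every ideal occurring in the statement lies in $\base$, hence is Fr\'echet by Lemma~\ref{frechet}, so $Q_\gamma^\perp=P_\gamma^{\perp\perp}=P_\gamma$ and $P_\gamma^\perp=Q_\gamma$ for all $\gamma$. Since $\cong$ is evidently preserved by $\perp$ and, by Lemma~\ref{properties_sumas_directas}(iv), $(I\oplus J)^\perp\cong I^\perp\oplus J^\perp$ for finite sums, applying $\perp$ to an isomorphism $P_\alpha\oplus P_\beta\cong P_\alpha$ gives $Q_\alpha\oplus Q_\beta\cong Q_\alpha$ and, using the Fr\'echet identity, the converse as well; the same move interchanges (iii) and (iv). Thus it suffices to prove (i) and (iii); (ii) and (iv) follow, and at lower levels of the induction I may freely use the instances of (ii),(iv), as they are equivalent to the already-proved instances of (i),(iii).

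The workhorse is the commutativity and associativity of countable sums, Lemma~\ref{properties_sumas_directas}(iii), together with the fact that $\bigoplus_n I_n$ depends only on the summands up to isomorphism. Hence, to obtain $P_\alpha\oplus J\cong P_\alpha$ for an infinite sum $P_\alpha=\bigoplus_n R_n$, it is enough either to absorb $J$ into one summand (some $R_{n_0}$ with $R_{n_0}\oplus J\cong R_{n_0}$), or, when $J=\bigoplus_m S_m$ is itself infinite, to pair $S_m$ with $R_m$ and absorb each $S_m$ into $R_m$. The diagonal cases $\beta=\alpha$ are handled by merging: for $\alpha=\gamma+1$ one has $P_{\gamma+1}\oplus P_{\gamma+1}=Q_\gamma^\omega\oplus Q_\gamma^\omega\cong Q_\gamma^\omega$ by fusing the two countable families of summands, while for $\alpha$ limit, $P_\alpha\oplus P_\alpha=\bigoplus_n Q_{\upsilon_n^\alpha}\oplus\bigoplus_n Q_{\upsilon_n^\alpha}\cong\bigoplus_n\bigl(Q_{\upsilon_n^\alpha}\oplus Q_{\upsilon_n^\alpha}\bigr)\cong\bigoplus_n Q_{\upsilon_n^\alpha}=P_\alpha$, the last step being the inductive instance of (ii) at each level $\upsilon_n^\alpha<\alpha$.

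For the off-diagonal cases I absorb the smaller ideal into a single summand. If $\alpha=\gamma+1$, the summands of $P_{\gamma+1}$ are copies of $Q_\gamma$, so for (iii) I use $Q_\gamma\oplus Q_\beta\cong Q_\gamma$ (the inductive (ii) at level $\gamma$, which for $\beta=\gamma$ is idempotence at $\gamma$), and for (i) with $\beta<\gamma$ I use $Q_\gamma\oplus P_\beta\cong Q_\gamma$ (the inductive (iv)). If $\alpha$ is limit, I pick $n_0$ with $\upsilon_{n_0}^\alpha\geq\beta$ (and $>\beta$ for (i)) and absorb into the summand $Q_{\upsilon_{n_0}^\alpha}$ by the inductive (ii)/(iv) at level $\upsilon_{n_0}^\alpha<\alpha$. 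The one remaining case is (i) with $\beta=\gamma$ at a successor, namely $Q_\gamma^\omega\oplus P_\gamma\cong Q_\gamma^\omega$: here $P_\gamma=\bigoplus_m Q_{\xi_m}$ with every $\xi_m<\gamma$ (for $\gamma=0$ use $P_0=\bigoplus_m\mathcal P(\{x_m\})$ instead), and I distribute, pairing the $m$-th summand $Q_{\xi_m}$ with the $m$-th copy of $Q_\gamma$ and absorbing via the inductive (ii) at $(\gamma,\xi_m)$, to obtain $\bigoplus_m(Q_\gamma\oplus Q_{\xi_m})\cong\bigoplus_m Q_\gamma=P_{\gamma+1}$.

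The delicate point, which I would write out most carefully, is exactly this last case: since one cannot expect a single copy of $Q_\gamma$ to absorb all of $P_\gamma$ (there is no reason $Q_\gamma\oplus P_\gamma\cong Q_\gamma$ should hold), the distribution across the countably many summands is essential and depends on $P_\gamma$ being an infinite sum of strictly lower-level pieces. The base level $\alpha=0$ is immediate, as $\mathcal P(\N)\oplus\mathcal P(\N)=\mathcal P(\N)$ and $\FIN\oplus\FIN=\FIN$, with (iii),(iv) vacuous; the only genuinely new small-scale verification is the absorption of a single free point, $Q_\xi\oplus\mathcal P(\{x\})\cong Q_\xi$, which I prove by the same induction (at level $0$ it is just $\FIN\oplus\mathcal P(\{x\})=\FIN$). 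Finally I must check throughout that every appeal to (ii) or (iv) is made at a level strictly below $\alpha$, or at level $\alpha$ only in its equivalent (i)/(iii) form, so that the recursion is well founded.
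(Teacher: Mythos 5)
Your proposal is correct and follows essentially the same route as the paper's proof: reduce (ii),(iv) to (i),(iii) by taking orthogonals, then induct on $\alpha$, absorbing the smaller ideal into a single summand in the off-diagonal cases and distributing $P_\mu=\bigoplus_m Q_{\xi_m}$ across the countably many copies of $Q_\mu$ in the critical case $P_{\mu+1}\oplus P_\mu$. The only cosmetic differences are that the paper handles $P_{\mu+1}\oplus P_{\mu+1}$ via $J^\perp\oplus J^\perp\cong(J\oplus J)^\perp$ rather than by fusing summand families, and disposes of the low cases $P_1\oplus P_0\cong P_1$ with an explicit bijection rather than your singleton decomposition of $\mathcal P(\N)$.
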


\proof By passing to the orthogonal and using Lemma \ref{properties_sumas_directas} we get that  (i) and (ii) are equivalent. The same occurs with (iii) and (iv). The rest of the proof is by induction on $\alpha$. The result is obvious for $\alpha=0$. It is easy to see that $P_1\oplus P_1\cong P_1 \cong P_1\oplus Q_0$. Now we show that $P_1\oplus P_0\cong P_1$. Consider the standard copy of $P_1$ defined above and the function $f:\N\cup \N^2\rightarrow \N^2$ given by $f(n)= (n,0)$, $f(n,m)= (n,m+1)$. It is left to the reader to check that $f$ is an isomorphism between $P_0\oplus P_1$ and
$P_1$.

Suppose the result holds for all ordinals smaller than $\alpha$.

(i). Let $\beta<\alpha$. We show that $P_\alpha\oplus P_\beta\cong P_\alpha$. If $\alpha$ is a limit ordinal, then $P_{\alpha}=\oplus_{n} Q_{\upsilon_{n}^{\alpha}}$. Therefore,
there must be $n_0\in\N$ such that
$\beta<\upsilon_{n_0}^{\alpha}<\alpha$. By the induction
hypothesis  $Q_{\upsilon_{n_0}^{\alpha}} \oplus P_\beta
\cong Q_{\upsilon_{n_0}^{\alpha}}$. Hence

\begin{equation}
\label{elminar-uno}
(\oplus_n Q_{\upsilon_{n}^{\alpha}})\oplus
P_\beta = \oplus_{n\neq n_o} Q_{\upsilon_{n}^{\alpha}}\oplus
(Q_{\upsilon_{n_0}^{\alpha}} \oplus P_\beta) \cong (\oplus_n
Q_{\upsilon_{n}^{\alpha}})
\end{equation}
Suppose now that $\alpha=\mu + 1$. Then
$P_{\alpha}=Q_{\mu}^{\omega}$. There are two cases to consider.  If $\beta<\mu$, by the induction hypothesis, we have that
$Q_{\mu}\oplus P_\beta\cong Q_{\mu}$ and, as in \eqref{elminar-uno},
we get that $Q_\mu^\omega\oplus P_\beta\cong Q_\mu^\omega$. Now,
if $\beta=\mu$, we have that $P_\beta=\oplus_n Q_{\xi_n}$, where
$\xi_n<\mu$ (no matter if $\mu$ is limit or not). By the inductive
hypothesis, $Q_{\mu}\oplus Q_{\xi_n}\cong Q_\mu$ and therefore
\begin{equation}
\label{elminar-uno-b}
Q_\mu^\omega \oplus P_\beta = Q_{\mu}^{\omega}\oplus \oplus_n Q_{\xi_n}\cong
\oplus_n (Q_{\mu}\oplus Q_{\xi_n}) {\cong} Q_{\mu}^{\omega}
\end{equation}
Thus, we have shown that $P_\alpha\oplus P_\beta\cong P_\alpha$
for $\beta<\alpha$.

Now we show $P_\alpha\oplus P_\alpha\cong
P_\alpha$. The argument is similar. If $\alpha$ is limit, we argue as in \eqref{elminar-uno-b}. And for $\alpha=\mu+1$,
we use that $J^\perp\oplus J^\perp\cong (J\oplus J)^\perp$.

(iii). The proof is entirely similar and is left to the reader.

\fin

\begin{lem}
\label{sumas-directas}
Let $(\xi)_n$ be a sequence of countable ordinals and $\alpha=\sup_n \xi_n$ (including the case $\xi_n=\alpha$ for some $n$).
\begin{enumerate}
\item[(i)] $(P_\beta)^\omega \cong P_\beta$ for all $\beta<\omega_1$.

\item[(ii)] Suppose $\xi_n<\alpha$  and $I_n\in\{ Q_{\xi_n},  P_{\xi_n},  Q_{\xi_n}\oplus  P_{\xi_n}\}$ for all $n$. Then $P_\alpha\cong \oplus_n I_n$.

\item[(iii)] $\oplus_n P_{\xi_n}\cong P_\alpha$.

\item[(iv)] $\oplus_n Q_{\xi_n}$ is equivalent to either $P_{\alpha+1}$, $P_\alpha$ or $P_\alpha\oplus Q_\alpha$.

\end{enumerate}

\end{lem}

\proof (i) Suppose first that $\beta=\mu+1$ is a successor ordinal. By Lemma \ref{idempotencia} (ii) we have $Q_\mu\oplus Q_\mu\cong Q_\mu$. Therefore,  $(P_{\mu+1})^\omega =((Q_\mu)^\omega)^\omega\cong (Q_\mu\oplus Q_\mu)^\omega\cong (Q_\mu)^\omega =P_{\mu+1}$.

Now suppose that $\beta$ is a limit ordinal.  By definition, $P_\beta=\oplus_n Q_{\upsilon_{n}^{\beta}}$ and $\beta=\sup_n \upsilon_{n}^{\beta}$.
 Pick an increasing sequence $(n_k)_k$ of integers such that ${\upsilon_{k}^{\beta}}<{\upsilon_{n_k}^{\beta}}$ for all $k$. By Lemma \ref{idempotencia}, $P_{\upsilon_{k}^{\beta}+1}\oplus Q_{\upsilon_{n_k}^{\beta}}\cong Q_{\upsilon_{n_k}^{\beta}}$ for all $k$. Let $A=\N\setminus\{n_k:\; k\in\N\}$.
It is easy to verify the following
\[
(P_\beta)^\omega\cong \oplus (Q_{\upsilon_{n}^{\beta}})^\omega\;\;\;\mbox{and}\;\;\; (P_\beta)^\omega\cong P_\beta\oplus (P_\beta)^\omega.
\]
From this and the fact that, by definition, $(Q_{\upsilon_{n}^{\beta}})^\omega= P_{\upsilon_{n}^{\beta}+1}$ for all $n$, we have
\[
\begin{array}{lcl}
(P_\beta)^\omega & \cong &(\oplus_n Q_{\upsilon_{n}^{\beta}}) \oplus (\oplus_n P_{\upsilon_{n}^{\beta}+1})\\
                              &\cong  & (\oplus_{n\in A} Q_{\upsilon_{n}^{\beta}})\oplus (\oplus_k P_{\upsilon_{k}^{\beta}+1}\oplus Q_{\upsilon_{n_k}^{\beta}})\\
                              &\cong & (\oplus_{n\in A} Q_{\upsilon_{n}^{\beta}})\oplus (\oplus_kQ_{\upsilon_{n_k}^{\beta}})\\
                              &\cong& P_\beta.
 \end{array}
\]

(ii)  First we show that we can assume that $(\xi_n)_n$ is strictly increasing. Let $n_0=0$ and $n_{k+1}=\min\{m:\; \xi_{n_k}<\xi_m\}$. Notice that $(\xi_{n_k})_k$ is strictly increasing. By Lemma \ref{idempotencia}, we have that for all $k$
\[
\oplus\{I_m:\; n_{k-1}<m\leq n_k\}\cong I_{n_k} \;\; \mbox{(where $n_{-1}=-1$).}
\]
By the same argument, we have that for any strictly increasing sequence $(m_k)_k$ the following holds
\begin{equation}
\label{absorcion}
\oplus_n I_n\cong \oplus_k I_{m_k}
\end{equation}
Fix two increasing sequences $(n_k)_k$ and $(m_k)_k$ such that $\xi_{n_k}\leq {\upsilon_{m_k}^{\alpha}}<\xi_{n_{k+1}}$ for all $k$. From \eqref{absorcion} and Lemma \ref{idempotencia},  we have
\[
P_\alpha=\oplus_m Q_{\upsilon_{m}^{\alpha}} \cong \oplus_k Q_{\upsilon_{m_k}^{\alpha}} \cong \oplus_k (Q_{\upsilon_{m_{k+1}}^{\alpha}}\oplus I_{n_{k}})\cong \oplus_k (Q_{\upsilon_{m_k}^{\alpha}}\oplus I_{n_{k+1}})\cong\oplus_k I_{n_k}\cong \oplus I_n.
\]
This finishes the proof of (ii).

\medskip

Before proving (iii) and (iv) we introduce some notation. Let $A=\{n\in\N:\; \xi_n=\alpha\}$, $B=\N\setminus A$ and  $\beta=\sup_n \{\xi_n:\; n\in B\}$.

\medskip

(iii) By induction on $\alpha$. If $\alpha=0$, the result holds trivially since $P_0\cong (P_0)^\omega$. Suppose it holds for all ordinals smaller than $\alpha$. We consider two cases. If $A$ is empty, then $\alpha$ is a limit ordinal and the result follows from part (ii). Suppose $A$ is not empty. Then
$$\oplus_n P_{\xi_{n}}\cong (\oplus_{n\in A} P_{\xi_n})\oplus (\oplus_{n\in B} P_{\xi_n})
$$
(if $B$ is empty, we do not include the second summand).  If $A$ is infinite, then $\oplus_{n\in A} P_{\xi_n}\cong (P_\alpha)^\omega \cong P_\alpha $ by part (i). From this, part (ii) and Lemma \ref{idempotencia} we conclude $\oplus_n P_{\xi_{n}}\cong P_\alpha$.   When $A$ is finite, the argument is similar.

\medskip

(iv) By induction on $\alpha$. Let $I= \oplus_n Q_{\xi_n}$. If $\alpha=0$, then $I$ is  $P_1$. If $A$ is empty, then  $I\cong P_\alpha$ by part (ii). Suppose $A$ is non empty. Then we use an argument analogous to that used in the proof of (iii). In fact,   if $A$ is finite, then $I\cong P_\alpha\oplus Q_\alpha$. If $A$ is infinite, then $I\cong P_{\alpha+1}$.
\fin

\bigskip

\bigskip

\noindent {\bf Proof of Theorem  \ref{theorem1}:}
From Lemmas \ref{idempotencia} and \ref{sumas-directas} we have that the collection $\mathcal C$ of all ideals isomorphic to either $P_\alpha$, $Q_\alpha$ or $P_\alpha\oplus Q_\alpha$, for some countable ordinal $\alpha$, is closed under finite or countable direct sums. Since $(P_\alpha)^\perp= Q_\alpha$, $(Q_\alpha)^\perp = P_\alpha$ and $(P_\alpha\oplus Q_\alpha)^\perp\cong P_\alpha^\perp\oplus Q_\alpha^\perp$, then $\mathcal C$ is also closed under orthogonal. Therefore $\mathcal{C}=\mathcal{B}$.
\fin

\begin{thr}\label{complxofB}
All  members of $\base$ are Fr\'echet of  Borel complexity at most $F_{\sigma\delta}$.
\end{thr}

\begin{proof}
From  Theorem \ref{theorem1} and Lemma \ref{frechet}, every member of $\base$ is a Fr\'{e}chet ideal. Notice that  $\FIN$ is $F_{\sigma\delta}$ (in fact $F_{\sigma}$) and  $\oplus I_n$ is $F_{\sigma\delta}$ if each $I_n$ is  $F_{\sigma\delta}$. The rest follows from  Theorem \ref{theorem1} and  Lemma~\ref{lemma1}.
\fin
\end{proof}

Most of ideals in $\base$ are complete $\fsigdel$. Clearly $P_0$ and $Q_0$ are $F_\sigma$. It is well known that
$P_1=\FIN^\omega$ is $\fsigdel$-complete \cite[pag.
179]{Kechris94} and $Q_1=(\FIN^{\omega})^{\perp}$ is $F_{\sigma}$ using
 Lemma~\ref{lemma1}. Now we take $J\in \mathcal{B}\setminus \{P_0, Q_0, P_0\oplus Q_0, P_1, Q_1, P_1\oplus Q_1\}$,
from Theorem \ref{theorem1} we have that $ \FIN^\omega \oplus J
\cong J $. Hence, $\FIN^\omega \hookrightarrow J$ and therefore
$\FIN^\omega \leq_{W} J$ (where $\leq_W$ is the Wadge reducibility
relation \cite{Kechris94}). Thus $J$ is  $\fsigdel$-complete.

From Theorem \ref{theorem1}, we know there are, up to isomorphism,
at most $\aleph_1$ ideals in $\mathcal B$. Now we will show that
the ideals $P_\alpha$, $Q_\alpha$ and $P_\alpha\oplus Q_\alpha$
are all non isomorphic. This is an inductive proof which  will be  split in several lemmas.

\begin{lem}
\label{restricciones}
 $\base$ is closed under restriction. Moreover,  let $I\in\{ Q_{\alpha},  P_{\alpha},  Q_{\alpha}\oplus  P_{\alpha}\}$,  $\alpha<\omega_1$ and $K\su \N$ infinite.
Then $I\upharpoonright K$ belongs to $\{ Q_{\xi},  P_{\xi},  Q_{\xi}\oplus  P_{\xi}\}$ for some $\xi\leq \alpha$.
\end{lem}
\proof By induction on $\alpha$. It suffices to show the result
for the ideals $P_\alpha$ and $Q_\alpha$. The result is obvious
for $\alpha=0$. The proof follows  from  Lemma \ref{idempotencia}, Lemma \ref{sumas-directas} and the following
two straightforward facts. (i) If $\{K_n:\; n\in \N\}$ is a partition
of $\N$, $I_n$ an ideal over $K_n$ and $K\su \N$ an  infinite set,  then
\[
(\oplus_n I_n) \upharpoonright K\cong \oplus_n (I_n
\upharpoonright K\cap  K_n).
\]
(ii) If  $I$ is an ideal over $\N$, then $I^\perp\upharpoonright K\cong (I\upharpoonright K)^\perp$.
\fin

\begin{lem}\label{lemmageneral}
(i) Let $\rho<\omega_1$ and $K\su\N$ infinite.
Assume that $Q_\rho\ncong Q_{\xi}\up E$, for every $\xi<\rho$ and
every infinite set $E\su\N$. Then, $Q_\rho \ncong P_\rho \up K$.

(ii) Let $\rho<\omega_1$ and $K\su\N$ infinite.
Assume that $Q_\rho\ncong Q_{\xi}\up E$, for every $\xi<\rho$ and
every infinite set $E\su\N$. Then, $Q_{\rho+1} \ncong P_\rho \up K$.


(iii) Let $\rho<\omega_1$. Assume that $Q_{\rho+1} \ncong Q_\rho \up E$,
for all infinite set $E\su\N$. Then, $Q_{\rho+1}\ncong P_{\rho}\oplus Q_{\rho}$.
\end{lem}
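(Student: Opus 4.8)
The plan is to prove all three non-isomorphisms by contradiction, exploiting a single structural dichotomy between the two canonical shapes that occur here. Recall that, by construction, $P_\rho$ is always a genuine infinite direct sum $\oplus_m Q_{\eta_m}$ over a partition into ``canonical blocks'' $B_m$ (with $\eta_m<\rho$, and $\sup_m\eta_m=\rho$ in the limit case, $\eta_m=\mu$ in the successor case $\rho=\mu+1$), whereas $Q_\rho=P_\rho^\perp$ is \emph{concentrated}: by Lemma \ref{lemma1}(ii) a set belongs to $Q_\rho$ iff it is supported on finitely many blocks $B_m$ (with the prescribed trace on each). The engine of the proof is the resulting asymmetry, which I will call the \emph{selector argument}: in any infinite direct sum $\oplus_n H_n$, a transversal meeting each block $H_n$ in a single point lies in the ideal (each $H_n$ contains the finite sets), while in a concentrated ideal such a transversal, meeting infinitely many $B_m$, never lies in the ideal. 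This property is preserved by isomorphism, so a concentrated ideal can be isomorphic to an infinite direct sum only in a degenerate way, which I must analyse.

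For part (i), suppose $h$ witnesses $Q_\rho\cong P_\rho\up K$. By the restriction facts in Lemma \ref{restricciones}, $P_\rho\up K=\oplus_n M_n$ with $M_n=Q_{\xi_n}\up L_n$ and $\xi_n<\rho$. Pulling the block partition back through $h$ yields a decomposition $Q_\rho=\oplus_n(Q_\rho\up D_n)$ with $D_n=h^{-1}(L_n)$ and $Q_\rho\up D_n\cong M_n$ of rank $<\rho$. Now I run the selector argument: if every $D_n$ met only finitely many canonical blocks $B_m$, then, since the $B_m$ are infinite in number, I could choose points $x_k\in D_{n_k}\cap B_{m_k}$ with the $n_k$ distinct and $m_k\to\infty$ and obtain a transversal of the $D_n$ meeting infinitely many blocks; it would be both in $\oplus_n(Q_\rho\up D_n)=Q_\rho$ and outside $Q_\rho$, a contradiction. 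Hence some $D_{n_0}$ meets infinitely many blocks. Restricting, $Q_\rho\up D_{n_0}=(\oplus_{m\in S}Q_{\eta_m}\up(D_{n_0}\cap B_m))^\perp$ is again concentrated over infinitely many blocks, and in the generic case (infinitely many of these traces keep full rank) Claim \ref{limite} and Theorem \ref{theorem1} identify it as $\cong Q_\rho$. Since also $Q_\rho\up D_{n_0}\cong M_{n_0}=Q_{\xi_{n_0}}\up L_{n_0}$ with $\xi_{n_0}<\rho$, this gives $Q_\rho\cong Q_{\xi_{n_0}}\up L_{n_0}$, contradicting the standing hypothesis.

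Part (ii) follows the same engine. Writing $P_\rho\up K=\oplus_n(Q_{\xi_n}\up L_n)$ and assuming $Q_{\rho+1}\cong P_\rho\up K$, the selector argument again forces a block $D_{n_0}$ of infinite span; but now $Q_{\rho+1}\up D_{n_0}=(\oplus_{m\in S}Q_\rho\up(D_{n_0}\cap B_m))^\perp$ generically computes, by Theorem \ref{theorem1}, to $\cong Q_{\rho+1}$ of rank $\rho+1$, while it is also $\cong Q_{\xi_{n_0}}\up L_{n_0}$ of rank $<\rho$; the rank clash is the contradiction, the hypothesis $Q_\rho\ncong Q_\xi\up E$ being used exactly to guarantee that the inner traces do not all collapse below rank $\rho$, so that the outer orthogonal retains rank $\rho+1$. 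For part (iii) I would pass to preimages: if $\phi$ witnesses $Q_{\rho+1}\cong P_\rho\oplus Q_\rho$, set $A_1=\phi^{-1}(X_1)$, $A_2=\phi^{-1}(X_2)$ for the two summands, so $Q_{\rho+1}\up A_1\cong P_\rho$ and $Q_{\rho+1}\up A_2\cong Q_\rho$. Since $\{A_1,A_2\}$ partitions the infinitely many canonical blocks of $Q_{\rho+1}$, one of them, say $A_2$, contains cofinitely many full blocks; then by absorption (Lemma \ref{idempotencia} and Theorem \ref{theorem1}) $Q_{\rho+1}\up A_2\cong Q_{\rho+1}$, whence $Q_{\rho+1}\cong Q_\rho$, contradicting the hypothesis $Q_{\rho+1}\ncong Q_\rho\up E$. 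In the complementary configuration $A_1$ has infinite span, and the selector plus duality analysis ($(P_\rho\oplus Q_\rho)^\perp\cong Q_\rho\oplus P_\rho$ from Lemma \ref{properties_sumas_directas}(iv), together with $Q_{\rho+1}^\perp=P_{\rho+1}$) delivers instead $Q_\rho\cong P_\rho$, which is refuted by part (i).

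The main obstacle, in all three parts, is the \emph{degenerate rank-collapse case}: the infinite-span block produced by the selector argument may be so thin (e.g.\ a transversal itself) that its restriction drops below the expected rank, so the concentrated restriction is not literally $\cong Q_\rho$ (resp.\ $Q_{\rho+1}$). Handling this is where the bookkeeping lives: I would, among all transported blocks, select one of maximal rank and use the direct-sum rank computations of Claim \ref{limite} and Theorem \ref{theorem1} to argue that a sum of pieces of rank $<\rho$ can only reproduce the rank of $Q_\rho$ (resp.\ $Q_{\rho+1}$) if a single piece already carries that rank, thereby pinning down a block on which the restriction is genuinely isomorphic to $Q_\rho$ (resp.\ $Q_{\rho+1}$) so that the hypothesis applies. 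Making this selection precise, and checking that the traces on infinitely many blocks simultaneously retain full rank, is the delicate step.
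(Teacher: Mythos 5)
Your core engine---the transversal (``selector'') asymmetry between a genuine infinite direct sum and a concentrated orthogonal---is exactly the engine of the paper's proof, and your pulled-back decomposition $Q_\rho=\oplus_n(Q_\rho\up D_n)$ together with the greedy construction of a transversal is sound. The genuine gap is in what you do after the selector argument hands you a block $D_{n_0}$ meeting infinitely many canonical blocks $B_m$. As you yourself flag, ``infinite span'' does not give $Q_\rho\up D_{n_0}\cong Q_\rho$: if $D_{n_0}$ is itself a transversal, then $Q_\rho\up D_{n_0}\cong\FIN$, and no conflict with the hypothesis arises. Worse, your proposed repair---pick a block of maximal rank and argue that ``a sum of pieces of rank $<\rho$ can only reproduce the rank of $Q_\rho$ if a single piece already carries that rank''---is false, and in this context circular: $P_\rho$ itself is a countable sum of pieces all of rank $<\rho$, yet it has rank $\rho$; indeed, by case (a) in the proof of Theorem \ref{theorem1}, a sum of small-rank pieces that has rank $\rho$ is isomorphic to $P_\rho$. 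So rank bookkeeping on the pieces $M_n\in\base_{\xi_n}$, $\xi_n<\rho$, can never exclude the possibility $\oplus_n M_n\cong P_\rho$; but $Q_\rho\cong\oplus_n M_n\cong P_\rho$ is precisely the isomorphism you assumed for contradiction, so this route terminates in the statement you are trying to disprove rather than in a violation of the hypothesis $Q_\rho\ncong Q_\xi\up E$ ($\xi<\rho$). Distinguishing $Q_\rho$ from $P_\rho$ cannot be done by rank arithmetic with Claim \ref{limite} and Theorem \ref{theorem1}; the isomorphism $f$ must be used combinatorially.

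The paper closes exactly this hole by iterating the selector construction pointwise and making the ``stuck'' condition global rather than block-local: one builds $p_k\in K_{n_k}$ with $f(p_k)\in L_{l_k}$, both index sequences increasing, and if at some stage no admissible next point exists, then the image of the \emph{entire tail} $D=\bigcup_{n>m_k}K_n$ lies in $\bigcup_{i\le m_k}L_i$. Since $Q_\rho\up D\cong Q_\rho$ (the tail is a full copy), while $P_\rho\up K$ restricted to finitely many blocks is a restriction of $Q_{\xi_0}\oplus\cdots\oplus Q_{\xi_{m_k}}\cong Q_\gamma$ with $\gamma=\max_i\xi_i<\rho$ (Lemma \ref{idempotencia}), the stuck case yields $Q_\rho\cong Q_\gamma\up E$---exactly the configuration the hypothesis forbids. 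This disposes of your ``degenerate rank-collapse'' configurations uniformly, with no genericity assumption. Your part (iii) has an additional flaw: for the partition $\{A_1,A_2\}$ pulled back from the two summands, it is not true that one piece must contain cofinitely many full blocks of $Q_{\rho+1}$---both preimages can meet every block partially---so your dichotomy is not exhaustive. The paper instead reruns the part (i) construction, choosing $p_0$ so that $f(p_0)$ lands in the $P_\rho$-side, and produces a set that is positive for $Q_{\rho+1}$ while its image lies in the direct sum.
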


\begin{proof}
(i) It is trivially true that $Q_0 \ncong P_0 \up K$. Suppose $\alpha>0$ and let $(\xi_n)_n$ be a sequence of ordinals such that $P_\rho=\oplus_n
Q_{\xi_n}$ (note that every $\xi_n$ is less than $\rho$ regardless  whether
$\rho$ is a limit or a successor ordinal). Let $\{ K_n
: n\in\N \}$ be a partition of $\N$ such that $P_\rho\up K_n = Q_{\xi_n}$ for all $n$ (Lemma \ref{particion-P}). Thus
$P_\rho\up K = \oplus_n Q_{\xi_n}\up (K\cap K_n)$.  Suppose, towards a contradiction, that $f: \N\rightarrow K$ is
an isomorphism witnessing $Q_\rho \cong P_\rho \up K$.

Let $L_n$ be $K\cap K_n$. We will define  sequences of integers
$(p_k)_k$, $(n_k)_k$, and $(l_k)_k$ with the following properties.
\begin{enumerate}
\item[(1)] $(n_k)_k$ and $(l_k)_k$ are increasing,
\item[(2)] $p_k\in K_{n_k}$, for all $k\in \N$
and $f(p_k)\in L_{l_k}$, for all $k\in \N$.
\end{enumerate}

Assume  we have constructed such sequences and  we get the required contradiction. Put $A=\{
p_k : k\in\N\}$. Since $(n_k)_k$ is increasing, by Lemma
\ref{lemma1} (ii), $A\notin Q_{\rho}$. On the other hand, as
$(l_k)_k$ is increasing, we have that $f[A]\cap L_{l_k}=\{f(p_k)\}$ for
all $k\in\N$, and hence $f[A]\in P_\rho\up K$. This contradicts
that $f$ is an isomorphism.

The sequences  $(n_k)_k$,  $(l_k)_k$ and $(p_k)_k$  are defined by recursion. Let
$p_0\in K_0$ and $l_0$ be such that $f(p_0)\in L_{l_0}$.
Put $n_0=0$ and $m_0=\max\{n_0,l_0\}$.
Suppose we have chosen $n_{k}$,
$l_{k}>m_{k-1}$ and $p_k\in K_{n_k}$ such that
\[
f(p_k)\notin \bigcup_{i=0}^{m_{k-1}}L_i  \textrm{ and } f(p_k)\in
L_{l_{k}}.
\]

Let $m_k=\max\{n_k,l_{k}\}$. We claim that

\[
(\exists n>m_{k})(\exists p\in K_{n}) (f(p)\notin \bigcup\limits_{i=0}^{m_k} L_i  ).
\]
Otherwise, we have that
\[
f[\bigcup_{n>m_{k}}K_n]\su \bigcup_{i=0}^{m_k}L_i.
\]
Let $D=\bigcup_{n>m_k}K_n$. By Lemma \ref{sumas-directas},  $P_{\rho}\up D\cong P_\rho$, thus  $Q_{\rho}\up D\cong Q_\rho$. Then, by Lemma \ref{idempotencia},
\[
Q_{\rho}\cong Q_{\rho}\up D\cong P_\rho\up f[D]=  (Q_{\xi_0}\oplus \cdots
\oplus Q_{\xi_{m_k}})\up f[D]\cong Q_{\gamma}\up f[D],
\]
where $\gamma=\max\{ \xi_0,\dots,\xi_{m_k} \}<\rho$. This contradicts the hypothesis. Therefore, the claim is proved.

Finally, we finish the construction of the sequences $(n_k)_k$, $(l_k)_k$.
Let ${n_{k+1}}$ and  $p_{k+1}$ be as in the claim and $l_{k+1}$ be greater than $m_k$ such that $f(p_{k+1})\in L_{l_{k+1}}$.

\medskip

(ii)  and (iii) are proved as  case (i).

\fin
\end{proof}

\begin{lem}\label{lemma9}
(i) $P_\alpha\not\cong P_\beta\up K$ for all $\beta<\alpha$ and all $K\su \N$ infinite

(ii) $P_\alpha\not\cong Q_\beta\upharpoonright K$ for all $\beta\leq\alpha$ and all $K\su \N$ infinite.

(iii) $P_\alpha\not\cong P_\beta\oplus Q_\beta$ for all $\beta\leq\alpha$.

\end{lem}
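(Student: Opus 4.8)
The plan is to prove the three parts simultaneously by induction on $\alpha$, assuming Lemma \ref{lemma9} for all smaller ordinals. First I would record a duality principle: if $f$ witnesses $I\cong J$ then the same $f$ witnesses $I^\perp\cong J^\perp$ (because $f[A\cap B]=f[A]\cap f[B]$ preserves finiteness), and by Lemma \ref{restricciones} (together with the identity $(I\up K)^\perp\cong I^\perp\up K$ established in its proof) one has $(Q_\beta\up K)^\perp\cong P_\beta\up K$. Consequently $P_\gamma\cong P_\delta\iff Q_\gamma\cong Q_\delta$, $\;P_\gamma\cong Q_\delta\up K\iff Q_\gamma\cong P_\delta\up K$, and $P_\gamma\cong P_\delta\oplus Q_\delta\iff Q_\gamma\cong P_\delta\oplus Q_\delta$, so the induction hypothesis automatically supplies the ``$Q$-versions'' of all three statements. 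The second preliminary is a reduction: by Lemma \ref{restricciones} every $Q_\beta\up K$ with $\beta<\alpha$ lies in $\base_\beta$, so by Theorem \ref{theorem1} it is isomorphic to one of $P_\eta,Q_\eta,P_\eta\oplus Q_\eta$ with $\eta<\alpha$. Hence all cases of (i), (ii), (iii) with $\beta<\alpha$ collapse to a single assertion $(\ast_\alpha)$: \emph{$P_\alpha$ is not isomorphic to any ideal of rank $<\alpha$}, i.e. $P_\alpha\ncong P_\eta,Q_\eta,P_\eta\oplus Q_\eta$ for every $\eta<\alpha$.

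For $\alpha$ limit, $(\ast_\alpha)$ falls out of the block structure $P_\alpha=\oplus_n Q_{\upsilon_n^\alpha}$: if $P_\alpha$ were isomorphic to an ideal $J$ of rank $\delta<\alpha$, I would pick $n$ with $\upsilon_n^\alpha>\delta$ and restrict the isomorphism to the $n$-th block, obtaining via Lemma \ref{restricciones} that $Q_{\upsilon_n^\alpha}\cong J\up f[K_n]\in\base_\delta$, which contradicts the induction hypothesis at level $\upsilon_n^\alpha$. Once $(\ast_\alpha)$ is in hand it is exactly the (dualized) hypothesis needed to apply Lemma \ref{lemmageneral}(i) at $\rho=\alpha$, giving $Q_\alpha\ncong P_\alpha\up K$, i.e. the case $\beta=\alpha$ of (ii). Finally the case $\beta=\alpha$ of (iii) follows from that case of (ii): an isomorphism $P_\alpha\cong P_\alpha\oplus Q_\alpha$ would, restricted to the $Q_\alpha$-summand, give $P_\alpha\up K'\cong Q_\alpha$, hence (taking orthogonals) $P_\alpha\cong Q_\alpha\up K'$, contradicting (ii).

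For $\alpha=\mu+1$ we have $P_\alpha=\oplus_n Q_\mu$. Restricting a block shows $P_\alpha$ is not isomorphic to a rank-$<\mu$ ideal, so $(\ast_\alpha)$ reduces to three comparisons with rank-$\mu$ ideals: $P_{\mu+1}\ncong P_\mu$, $P_{\mu+1}\ncong Q_\mu$, $P_{\mu+1}\ncong P_\mu\oplus Q_\mu$. The middle one is the dual of Lemma \ref{lemmageneral}(ii), whose hypothesis $(\ast_\mu)$ is the induction hypothesis. The first, in its restricted form $P_{\mu+1}\ncong P_\mu\up E$, is the crux: since $P_\mu\up E$ is a countable direct sum of rank-$<\mu$ ideals, restricting a block of $P_{\mu+1}$ would exhibit $Q_\mu$ as such a sum. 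I would rule this out by the absorption calculus (call it Claim C): using Lemma \ref{idempotencia}, Claim \ref{limite}, and the identities $\oplus_n Q_\sigma\cong P_{\sigma+1}$ and $\oplus_n P_\sigma\cong P_\sigma$, every infinite direct sum of rank-$<\mu$ ideals is isomorphic either to $P_\mu$ or to an ideal of rank $<\mu$ — never to $Q_\mu$ — so $Q_\mu\cong\oplus_m E_m$ would force $Q_\mu\cong P_\mu$ or $Q_\mu$ of rank $<\mu$, both excluded by the induction hypothesis. With $P_{\mu+1}\ncong P_\mu\up E$ established, it is precisely the (dualized) hypothesis of Lemma \ref{lemmageneral}(iii) at $\rho=\mu$, which yields the third comparison $P_{\mu+1}\ncong P_\mu\oplus Q_\mu$. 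This completes $(\ast_\alpha)$, and then the cases $\beta=\alpha$ of (ii) and (iii) are obtained exactly as in the limit case.

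The main obstacle is Claim C together with the order of deduction around it. All the genuine content is funnelled into $P_{\mu+1}\ncong P_\mu\up E$, and one must verify both that an infinite direct sum of strictly lower ideals can never reproduce a $Q_\mu$-component — this is where the absorption identities $\oplus_n P_\sigma\cong P_\sigma$ and $\oplus_n Q_\sigma\cong P_{\sigma+1}$ do the essential work, the latter being what distinguishes the behaviour of $Q$ from $P$ — and that the deductions are arranged without circularity: the restricted form of (i) must be proved before (iii), and every ``$\beta<\alpha$'' case before the corresponding ``$\beta=\alpha$'' case, so that the hypotheses of Lemma \ref{lemmageneral} are available exactly when invoked.
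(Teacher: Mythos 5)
Your proposal is correct and its overall architecture --- simultaneous induction on $\alpha$, duality under taking orthogonals, block-restriction arguments combined with Lemma \ref{restricciones}, and Lemma \ref{lemmageneral} for the diagonal cases $\beta=\alpha$ --- coincides with the paper's. Your packaging of all the $\beta<\alpha$ cases into the single assertion $(\ast_\alpha)$ is if anything cleaner than the paper's presentation, which states (i) in unrestricted form but proves and uses the restricted form $P_\alpha\ncong P_\beta\up E$ throughout. The one genuine divergence is at the step you identify as the crux, $P_{\mu+1}\ncong P_\beta\up E$ for $\beta\leq\mu$. You prove it via Claim C, an absorption argument showing that an infinite direct sum of rank-$<\mu$ ideals can only be $P_\mu$ or an ideal of lower rank, never $Q_\mu$; this does work (it is essentially a re-run of case (a) of the proof of Theorem \ref{theorem1} together with the dualized induction hypothesis to exclude $Q_\mu\cong P_\mu$ and $Q_\mu$ of lower rank), but it is the heaviest part of your write-up. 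The paper dispatches the same step in two lines: restricting a putative isomorphism $P_{\mu+1}\cong P_\beta\up K$ to the block $K_0$ gives $Q_\mu\cong P_\beta\up f[K_0]$, and taking orthogonals yields $P_\mu\cong Q_\beta\up f[K_0]$, which is exactly what part (ii) of the induction hypothesis at level $\mu$ forbids --- note that (ii) is stated for $\beta\leq\alpha$, so it applies even when $\beta=\mu$. The trade-off: your route makes the structural reason visible (infinite sums of lower-rank ideals produce $P$'s, not $Q$'s) at the cost of re-deriving part of Theorem \ref{theorem1}, while the paper's route exploits the deliberate asymmetry in the statement of the lemma (part (ii) allowing $\beta=\alpha$) so that the orthogonal trick closes the loop immediately.
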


\begin{proof} The proof is by induction on $\alpha$. It is easy to check
that the result holds for $\alpha\leq 1$. Suppose that (i), (ii) and (iii) hold
for all $\gamma<\alpha$ and we show it for $\alpha$.

\medskip

\noindent (a) Case $\alpha$ limit.

(i) Let $\beta<\alpha$  and $K\su \N$ infinite.
Suppose, towards a contradiction, that $f$
is an isomorphism witnessing $P_\alpha\cong P_\beta\up K$. Let
$\{K_n:\;n\in \N\}$ be the partition of $\N$ such that
$P_\alpha\up K_n\cong  Q_{\upsilon_{n}^{\alpha}}$  (Lemma \ref{particion-P}). Let $m$ be such that
$\beta<\upsilon_m^{\alpha}$. Then $Q_{{\upsilon_m^{\alpha}}} \cong P_\beta\upharpoonright f[K_m]$.  Since $\beta<\upsilon_m^{\alpha}$,  we get a contradiction using Lemma
\ref{restricciones} and the inductive hypothesis.

For parts (ii) and (iii),  fix $\beta<\alpha$  and $K\su \N$ infinite.
Arguing as in part (i) we conclude that $P_\alpha\not\cong Q_\beta\up K$
for all $\beta<\alpha$ and $P_\alpha\not\cong P_\beta\oplus Q_\beta$.

Now we are going to show (ii) and (iii) for $\alpha = \beta $.

(ii) Suppose $\alpha=\beta$. We have just proved that
$P_{\alpha}\ncong P_\beta \up E$, for all $\beta<\alpha$ and all
$E\su \N$ infinite. After taking orthogonal, we get the
hypothesis of Lemma \ref{lemmageneral} (i), thus
$Q_{\alpha}\ncong P_{\alpha} \up K$. Taking orthogonal again we
get $P_{\alpha}\ncong Q_{\alpha}\up K$.

(iii) Suppose $\alpha=\beta$ and, towards a contradiction, that
$P_{\alpha}\cong P_\alpha \oplus Q_\alpha$. Then $Q_\alpha \cong
P_\alpha\up C$, for some infinite set $C\su \N$. Taking orthogonal
we get $P_\alpha \cong Q_\alpha \up C$. This contradicts what we
just proved in part (ii).

\medskip

\noindent (b) Case  $\alpha=\mu+1$. Since  $P_{\alpha}=(Q_\mu)^\omega$, then there is $K_0$ infinite such that  $P_\alpha\upharpoonright K_0\cong Q_\mu$.

(i) Let $\beta<\alpha$ and $K\su \N$ infinite. Suppose, towards a
contradiction, that $f$ is an isomorphism witnessing
$P_\alpha\cong P_\beta\up K$. Thus $Q_\mu\cong
P_\beta\upharpoonright f[K_0]$ and taking orthogonal we get
$P_\mu\cong Q_\beta\upharpoonright f[K_0]$. But that contradicts
part (ii) of our inductive hypothesis, as $\beta \leq \mu<
\alpha$.

(ii) Let $\beta\leq\alpha$ and $K\su \N$ infinite. First, we
suppose $\beta <\mu<\alpha$. Assume, towards a contradiction, that
$f$ is an isomorphism witnessing that $P_{\mu+1} \cong Q_\beta \up
K$.  As before, we have $Q_\mu\cong Q_\beta \up f[K_0]$ and thus $P_\mu \cong P_\beta \up f[K_0]$, which contradicts  the inductive hypothesis. Now suppose that  $\beta = \mu$. Using part (ii) of Lemma
\ref{lemmageneral} we get that $P_{\mu+1}\ncong Q_\mu \up K$.
Finally, suppose $\beta =\mu+1$. In part (i) we just proved that
$P_{\alpha}\ncong P_\beta \up E$, for all $\beta<\alpha$ and all
$E\su \N$ infinite, which is (after taking orthogonal)  the
hypothesis of Lemma \ref{lemmageneral}(i). Thus, $Q_{\alpha}\ncong
P_{\alpha} \up K$. Taking orthogonal we get $P_{\alpha}\ncong
Q_{\alpha}\up K$.

(iii) Let $\beta\leq\alpha$ and $K\su \N$ infinite. First, we
suppose $\beta<\mu$. Assume, towards a contradiction, that
$P_\alpha\cong P_\beta \oplus Q_\beta$. Let $f$ be a map
witnessing this fact. As before,
$Q_\mu\cong (P_\beta \oplus Q_\beta)\up f[K_0]$. By Lemma
\ref{restricciones}, we conclude that  $Q_\mu$ is isomorphic to
either $P_\gamma$, $Q_\gamma$ or $P_\gamma \oplus Q_\gamma$, for
some $\gamma\leq \beta<\mu$. This contradicts the inductive
hypothesis.

Now  suppose $\beta=\mu$. In part (i) we just proved that
$P_{\mu+1}\ncong P_\mu \up E$, for all $E\su \N$ infinite. After
taking  orthogonal, we get the hypothesis of the Lemma
\ref{lemmageneral} (iii). Hence, $Q_{\mu+1}\ncong P_\mu \oplus
Q_\mu$. Again, we take orthogonal to get $P_{\mu+1}\ncong P_\mu
\oplus Q_\mu$.

Finally, suppose  $\beta=\mu + 1$. Assume, towards a contradiction
that $ Q_\alpha \oplus P_\alpha\cong P_\alpha$ and denote by $g$ a
function witnessing this fact. Let $C$ be an infinite set $C\su
\N$ such that $(P_\alpha\oplus Q_\alpha)\up C\cong Q_\alpha$.
Then, $Q_\alpha\cong P_\alpha\up g[C]$. But in part (ii) we just
proved that this is impossible.

\fin
\end{proof}

From the previous results we immediately get the following
\begin{thr}\label{theorem3}
The family $\mathcal{B}$ has $\aleph_1$  pairwise non isomorphic
ideals.
\end{thr}

\medskip

As we mentioned in the introduction, we do not know whether $\base$ is a complete list of Borel gaps. However,  if $I$ is a Fr\'{e}chet ideal (on $\N$) such that $I$ and $I^{\perp}$ are both Borel,
we can consider the smallest collection $\mathcal{B}(I)$  of ideals on $\N$ containing $I$ and closed
under the operation of taking countable sums, orthogonal, and closed under restrictions. Since ${\mathcal B}(I)$
is closed under restrictions, we have that ${\FIN} \in {\mathcal B}(I)$, whence
$\base\su\mathcal{B}(I)$ (by lemma~\ref{idempotencia}). Thus, $\mathcal{B}(I)$ contains at least $\aleph_1$ pairwise non isomorphic ideals.


\section{Borel restrictions of analytic ideals}
\label{section4}

In this section we give a representation of the ideals in $\base$  as restrictions of $I_{wf}$.
More generally,  given an ideal $I$  it is natural to investigate which restrictions of $I$ belong to $\mathcal{B}$.  We analyze this problem for $I_{wf}$,  $WO(\Q)$ and for the orthogonal of a  selective ideals.
We could summarize our results saying that such restrictions belong to $\base$ exactly when they are Borel.

\subsection{The ideals in $\mathcal{B}$ are restrictions of $I_{wf}$}


\begin{thr}
\label{theorem4} Every member of $\mathcal{B}$ is isomorphic to
some restriction of $I_{wf}$ and also  to some restriction of
$I_d$. 
\end{thr}

The proof is based in the following facts. Recall that $\mathcal{N}_{t}=\{s\in \N^{<\omega}: t\preceq s\}$ for each $t\in \N^{<\omega}.$

\begin{lem}\label{lemma2}
\begin{enumerate}
\item[(i)] Let $s\in\N^{<\omega}$ and   $B_n$ be an infinite
subset of $\mathcal{N}_{s^{\smallfrown} n}$  for each $n\in\N$.
Then
\[
I_{wf}\up (\cup_n B_n) \cong \oplus_n I_{wf}\up B_n.
\]
\item[(ii)] Let $\theta\in \mathbb{N}^{\omega}$  and put
$s_n=\theta\up (n-1) ^{\smallfrown} (\theta(n) + 1)$ and  fix an
infinite subset $B_n$ of $\mathcal{N}_{s_n}$ for each $n\in \N$.
Then
\[
I_{wf}\up (\cup_n B_n) \cong (\oplus_{n} I_{wf}^{\perp}\up
B_n)^{\perp}= (\oplus_{n} I_{d} \up B_n)^{\perp}.
\]
The figures below illustrate how we are choosing the $B_{n}$'s in i) and ii).
\end{enumerate}
\end{lem}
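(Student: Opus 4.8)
Both parts reduce to the basic fact that, for $A\su\N^{<\omega}$, we have $A\in I_{wf}$ iff the tree $\langle A\rangle$ is wellfounded, i.e. has no infinite branch; so the plan is to analyze where an infinite branch of $\langle A\rangle$ can go, given the position of the cones. In (i) the $\mathcal{N}_{s^{\frown}n}$ are the immediate successors of a single node $s$, while in (ii) the $\mathcal{N}_{s_n}$ are pairwise incomparable siblings strung out along the branch $\theta$, each $s_n$ differing from $\theta$ only in its last coordinate. In both cases distinct cones are disjoint, so $\{B_n:n\in\N\}$ is a partition of $\cup_n B_n$, and I form all direct sums with respect to this partition. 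For (i), any infinite branch $\beta$ of $\langle A\rangle$ must satisfy $\beta\up|s|=s$ (every element of $A$ extends $s$), hence $\beta\up(|s|+1)=s^{\frown}n_0$ for a unique $n_0$; by disjointness every element of $A$ above this node lies in $B_{n_0}$, so the tail of $\beta$ is an infinite branch of $\langle A\cap B_{n_0}\rangle$. Thus $\langle A\rangle$ is wellfounded iff each $\langle A\cap B_n\rangle$ is, which is exactly the defining condition $A\cap B_n\in I_{wf}$ for all $n$ of the ideal $\oplus_n(I_{wf}\up B_n)$; the two ideals coincide and (i) follows.

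For (ii) the new phenomenon is that a branch can run down the spine $\theta$. If $A$ meets infinitely many $B_m$, then since each element of $B_m$ has $\theta\up(m-1)$ as a prefix, every $\theta\up k$ lies in $\langle A\rangle$, so $\theta$ is an infinite branch and $A\notin I_{wf}$. Conversely, if $A\su\bigcup_{i\le k}B_i$, the part of the spine present in $\langle A\rangle$ is finite and, by disjointness of the cones, any infinite branch must eventually enter a single $\mathcal{N}_{s_i}$ and restrict to a branch of $\langle A\cap B_i\rangle$. Hence $A\in I_{wf}$ iff there is $k$ with $A\su\bigcup_{i\le k}B_i$ and $A\cap B_i\in I_{wf}$ for all $i\le k$. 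Comparing with the characterization of $(\oplus_n J_n)^\perp$ in Lemma \ref{lemma1}(ii), applied with $J_n=I_d\up B_n$ and the partition $\{B_n\}$, this is exactly the assertion that $A\in(\oplus_n I_d\up B_n)^\perp$, once one knows $(I_d\up B_n)^\perp\cong I_{wf}\up B_n$. Using $I_{wf}^\perp=I_d$ and the restriction identity $I^\perp\up K\cong(I\up K)^\perp$ from Lemma \ref{restricciones}, one gets $I_d\up B_n\cong(I_{wf}\up B_n)^\perp$ and therefore $(I_d\up B_n)^\perp\cong(I_{wf}\up B_n)^{\perp\perp}$, so the identification is precisely the statement that $I_{wf}\up B_n$ is Fr\'echet.

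I expect this last point to be the main obstacle, and I would isolate it as a claim: $I_{wf}$ is Fr\'echet, i.e. $I_{wf}=I_{wf}^{\perp\perp}=I_d^\perp$. The inclusion $I_{wf}\su I_d^\perp$ is routine, since a set that is both wellfounded and dominated generates a finitely branching wellfounded tree, which is finite by K\"onig's lemma. For the reverse, suppose $A$ is not wellfounded, with branch $\beta$, and choose for each $k$ some $a_k\in A$ with $\beta\up k\preceq a_k$. Then $\{a_k:k\in\N\}$ is infinite (the lengths tend to $\infty$) and is dominated by $\alpha(p)=\max(\{\beta(p)\}\cup\{a_k(p):k\le p\})$, because for $k>p$ one has $a_k(p)=\beta(p)$, so at each coordinate only finitely many $a_k$ can deviate. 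Thus every ill-founded $A$ contains an infinite member of $I_d=I_{wf}^\perp$, giving $I_d^\perp\su I_{wf}$. Finally, Fr\'echetness passes to restrictions: from $(I\up K)^\perp\cong I^\perp\up K$ one gets $(I\up K)^{\perp\perp}\cong I^{\perp\perp}\up K$, so each $I_{wf}\up B_n$ is Fr\'echet and $(I_d\up B_n)^\perp\cong I_{wf}\up B_n$, which closes the argument for (ii).
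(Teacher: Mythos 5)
Your proof is correct and follows essentially the same route as the paper's: the same branch-localization argument for (i), and for (ii) the same observation that a wellfounded subset of $\cup_n B_n$ can meet only finitely many of the $B_n$'s (else $\theta$ is a branch of the generated tree), combined with the characterization of $(\oplus_n J_n)^\perp$ from Lemma \ref{lemma1}(ii). The one place you go beyond the paper is in explicitly isolating and proving the identity $I_d^\perp=I_{wf}$ (the Fr\'echet property of $I_{wf}$) together with its persistence under restriction; the paper uses $I_{d}^{\perp}\up B_i=I_{wf}\up B_i$ silently, relying on the cited description of the pair $(I_{wf},I_d)$, so your self-contained verification of that step is a welcome addition rather than a divergence.
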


\begin{figure}[h] 

\hfill
\begin{minipage}[h]{.45\textwidth}
\begin{center}
\begin{pspicture}(-4.2,0.2)(4.2,6.2)

\psline{-*}(0,1)(0,2)
\put(-0.1,0.5){$\emptyset$}
\put(0.2,1.5){$s$}

\psline{-*}(0,2)(-3,4)
\psline(-3,4)(-4,6)
\psline(-3,4)(-2,6)
\put(-3.3,5){$B_0$}
\put(-4,3.8){$s^{\smallfrown}0$}

\psline{-*}(0,2)(0,4)
\psline(0,4)(-1,6)
\psline(0,4)(1,6)
\put(-0.3,5){$B_1$}
\put(-1,3.8){$s^{\smallfrown}1$}

\psline{-*}(0,2)(3,4)
\psline(3,4)(2,6)
\psline(3,4)(4,6)
\put(2.7,5){$B_n$}
\put(0.8,4){$\dots$}
\put(2,4){$\dots$}t
\put(3.3,3.8){$s^{\smallfrown}n$}
\end{pspicture}
\end{center}
\caption{Lemma~\ref{lemma2} i)}
\label{fig1}
\end{minipage}
\hfill \, \,
\hfill \, \,
\hfill \, \,
\begin{minipage}[h]{.45\textwidth}
\begin{center}
\begin{pspicture}(-5.2,0.2)(2,6)
\psline(-1,1)(-5,5)
\put(-1.1,0.5){$\emptyset$}
\put(-5.1,5.1){$\theta$}

\psline{*-*}(-1,1)(0,2)
\psline(0,2)(-1,3)
\psline(0,2)(1,3)
\put(-0.3,2.4){$B_0$}

\psline{*-*}(-3,3)(-2,4)
\psline(-2,4)(-4,6)
\psline(-2,4)(0,6)
\put(-2.2,5.5){$B_n$}
\put(-4.2,2.8){{\tiny $\theta\up (n-1)$}}
\put(-4.9,4){{\tiny $\theta\up (n)$}}
\put(-1.8,4){{\tiny $\theta\up (n-1) ^{\smallfrown} (\theta(n) + 1)$}}

\psline[linestyle=dashed]{*-}(-4,4)(-2,4)

\put(-1.9,2.5){$\ddots$}
\end{pspicture}
\end{center}
\caption{Lemma~\ref{lemma2} ii)}
\label{fig2}
\end{minipage}
\hfill
\end{figure}

\begin{proof} (i) Notice that if $A\in I_{wf}\up (\cup_n B_n)$, then $A\cap
B_n$ is also a well founded set, for all $n\in \N$. Therefore $A
\in \oplus_n I_{wf}\up B_n$. Conversely, if $A\cap B_n \in I_{wf}$
for all $n\in \N$, since $\{ s^{\smallfrown}n : n\in \N \}$ is an
antichain, we have that $A$ is a well founded set. So $A\in
\oplus_n I_{wf}\up B_n$. Thus
\[
A\in I_{wf}\up (\cup_n B_n) \Leftrightarrow A \in \oplus_n I_{wf}
\up B_n.
\]
Therefore, $ I_{wf}\up (\cup_n B_n) \cong \oplus_{n} I_{wf}\up
B_n.$

\medskip

(ii) Take $A\su \cup_n B_n $. If $A$ is well founded, it can only
have a non empty intersection with finitely many $B_n$'s (otherwise,
any tree containing $A$ will have $\theta$ as a branch). So there
is $n_0\in\N$ such that $A\su \bigcup\limits_{i\leq n_0} B_i$.
From this and Lemma \ref{lemma1} we have that $A\in (\oplus_n
I_{wf}^{\perp}\up B_n)^{\perp}$. Conversely, if $A\in (\oplus_n
I_d \up B_n)^{\perp}$, by Lemma \ref{lemma1}, there is $n_0\in \N$
such that $A\su \bigcup\limits_{i\leq n_0} B_i$ and $A\cap B_i \in
I_{d}^{\perp}\up B_i=I_{wf}\up B_i$ for every $i\leq n_0$.
Therefore, $A\in I_{wf}\up (\cup_n B_n)$. Thus
\[
A\in I_{wf}\up (\cup_n B_n) \Leftrightarrow A\in (\oplus_n I_d\up
B_n)^{\perp}.
\]
Hence $I_{wf}\up (\cup_n B_n) \cong  (\oplus_{n} I_d\up
B_n)^{\perp}$.
\fin
\end{proof}

\bigskip

\noindent{\bf Proof of Theorem \ref{theorem4}} From Theorem
\ref{theorem1}, it suffices to show that the ideals $P_\alpha$,
$Q_\alpha$ and $P_\alpha\oplus Q_\alpha$ are isomorphic to a
restriction of $I_{wf}$. Since $I_d$ is the orthogonal of $I_{wf}$
and $\base$ is closed under taking orthogonal, then the result
also holds for $I_d$. The proof will be by transfinite induction
on $\alpha$.

For $\alpha=0$, take infinite sets $A\in I_{wf}$ and $B\in I_{d}$
we have that $I_{wf}\up A \cong \mathcal{P}(\N)$, $I_d\up A\cong
\FIN$, $I_{wf}\up B \cong \FIN$ and, $I_d\up B \cong
\mathcal{P}(\N)$.

Suppose that the result hods for all $\xi<\alpha$. By definition,
$P_{\alpha}=\oplus_n Q_{\upsilon_n}$,  where $\upsilon_n<\alpha$
for all $n\in\N$. Notice that  $I_{wf}\up \mathcal{N}_{\langle n
\rangle}\cong I_{wf}$ for each $n\in\N$. So, by the inductive
hypothesis, for each $n$, there is $B_n\su \mathcal{N}_{\langle n \rangle}$
such that $I_{wf}\up B_n\cong Q_{\upsilon_n}$. From this and Lemma
\ref{lemma2}(i) we conclude
\[
I_{wf}\up(\cup_n B_n)\cong \oplus_n I_{wf}\up B_n\cong \oplus_n
Q_{\upsilon_n}\cong P_{\alpha}.
\]

Now we will show the result for $Q_\alpha$.  Notice that $I_{d}\cong I_d
\up\mathcal{N}_{0^{n+1}\widehat{\;\;} 1}$ for each $n\in \N$. By
the inductive hypothesis, for each $n$, there is $B_n\su
\mathcal{N}_{0^{n+1}\widehat{\;\;}1}$ such that $I_d\up B_n\cong
P_{\upsilon_n}$. Hence, by Lemma \ref{lemma2}(ii), where $\theta$
is the constantly equal to zero sequence, we have that
\[
I_{wf}\up (\cup_n B_n)\cong (\oplus_n I_d\up B_n)^{\perp} \cong
(\oplus_n P_{\upsilon_n})^{\perp}\cong Q_{\alpha}.
\]
Thus we have shown that $P_{\alpha}$ and $Q_{\alpha}$ are
isomorphic to a restriction of $I_{wf}$.

Finally,  since $I_{wf}\cong
I_{wf}\up \mathcal{N}_{\langle 0 \rangle }$ and $I_{wf}\cong
I_{wf}\up \mathcal{N}_{\langle 1 \rangle}$, there are infinite
sets $C\su \mathcal{N}_{\langle 0 \rangle }$ and $D \su
\mathcal{N}_{\langle 1 \rangle }$ such that $I_{wf}\up C\cong
P_{\alpha}$ and $I_{wf}\up D\cong Q_{\alpha}$. Thus
\[
I_{wf}\up (C\cup D) \cong I_{wf}\up C \oplus I_{wf} \up D\cong
P_{\alpha}\oplus Q_{\alpha}.
\]

\fin

\bigskip

\subsection{Borel restrictions of the orthogonal of a selective ideal}

The following result is due to Krawczyk \cite{Krawczyk92},  we will state it   as in the work of  P. Dodos and V. Kanellopoulos \cite{DK}. Let $\mathcal{C}$ be the collection of chains in $\seq$.

\begin{thr}
\label{krawczyk} (Krawczyk \cite{Krawczyk92}) Let $I$ be an analytic selective ideal. Then  either
\begin{itemize}
\item[(i)] $I$ is countably generated, or

\item[(ii)] There exists a one-to-one map $\Psi :\seq\rightarrow \N$ such that
$\mathcal{C} \subseteq \{\Psi^{-1}(A):\; A\in I\}$  and $I_{wf}\subseteq \{\Psi^{-1}(B): \; B\in I^{\perp}\}$.
\end{itemize}
\fin
\end{thr}

\begin{thr}\label{teorema_selectivos}
Let $I$ be analytic selective ideal and $A\subseteq \N$. The following are equivalent

\begin{itemize}
\item[(i)] $I\up A$ is countably generated
\item[(ii)] $I^\perp\up A\in \base$.
\item[(iii)] $I^\perp\up A $ is Borel
\item[(iv)] $I_{wf}\not\hookrightarrow I^\perp\up A$.
\end{itemize}
\end{thr}

\proof An ideal is countably generated iff it is a finite direct sum of ideals belonging to  $\{P_0, Q_0, Q_1\}$. Therefore, it is clear that $(i)\Rightarrow (ii) \Rightarrow (iii)\Rightarrow (iv)$.  The remaining implication follows from \ref{krawczyk}. In fact,  suppose $I\up A$ is not countably generated. Since $I\up A$ is analytic and selective, there is $\Psi :\seq\rightarrow A$ as in the statement of the theorem \ref{krawczyk}. We claim  $I_{wf}\hookrightarrow I^{\perp}\up A$. If $E\in I_{wf}$, then clearly $\Psi(E)\in I^{\perp}$. On the other hand, if $E\not\in I_{wf}$, there is $B\in \mathcal{C}$ such that $B\cap E$ is infinite, as $\Psi(B\cap E)$ is infinite and belongs to $I$, then  $\Psi(E) \not\in I^{\perp}$.
\fin

%
%
%

\subsection{Borel restrictions of $I_{wf}$}

In this section, we analyze the Borel restriction of the ideal $I_{wf}$.

\begin{thr}
\label{theorem6} For every  $A\su \N^{<\omega}$, the following are
equivalent:
\begin{itemize}
\item[(i)] $I_{wf}\!\!\upharpoonright \!A$  belongs to
$\mathcal{B}$. \item[(ii)] $I_{wf}\!\!\upharpoonright \!A$ is
Borel.

\item[(iii)] $I_{wf} \not\hookrightarrow I_{wf}\up A$.

\end{itemize}
\end{thr}

Since $I_{wf}$ is a complete co-analytic set and each ideal in
$\base$ is Borel, then it is clear that $(i)\Rightarrow
(ii)\Rightarrow (iii)$.  We will need several auxiliary results
for proving the implication $(iii)\Rightarrow (i)$.

The first step is to show that we can reduce the problem to the
case when $A$ is a tree. Recall that a set $D\su \N^{<\omega}$ is
said to be {\em dense}, if for all $t\in \N^{<\omega}$, there is
$d\in D$ such that $t\preceq d$. The following result is probable
known, we include its proof for the sake of completeness.

\begin{lem}
\label{lemmadensidad} If $D\su \N^{<\omega}$ is dense, then
$I_{wf}\hookrightarrow I_{wf}\up D$.
\end{lem}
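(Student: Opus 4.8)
The plan is to construct an explicit isomorphic embedding $f:\N^{<\omega}\to D$ whose image lies in $D$ and which is a \emph{tree embedding}, in the sense that $s\preceq t$ iff $f(s)\preceq f(t)$ and $s,t$ are $\preceq$-incomparable iff $f(s),f(t)$ are. Once $f$ has these properties it is automatically injective and, since $f[A]\su D$, it will suffice to prove that $A$ is well founded if and only if $f[A]$ is well founded; this is exactly the statement that $A\in I_{wf}$ iff $f[A]\in I_{wf}\up D$.

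The engine of the construction is the following consequence of density: for every $t\in\N^{<\omega}$ there are infinitely many pairwise $\preceq$-incomparable elements of $D$, each a proper extension of $t$. Indeed, using density pick $d\in D$ with $t\preceq d$; then for each $m\in\N$ apply density to $d\,^\frown m$ to obtain $d_m\in D$ with $d\,^\frown m\preceq d_m$. For $m\neq m'$ the sequences $d_m,d_{m'}$ disagree at coordinate $|d|$, so $\{d_m:m\in\N\}$ is an infinite antichain of proper extensions of $t$ inside $D$. With this I define $f$ by recursion on length: set $f(\langle\rangle)$ to be any element of $D$, and given $f(s)$, choose an infinite antichain $\{e_n:n\in\N\}\su D$ of proper extensions of $f(s)$ and put $f(s\,^\frown n)=e_n$. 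By construction $f(s)\prec f(s\,^\frown n)$, so $f$ preserves $\preceq$; and if $s,t$ are incomparable with longest common initial segment $u$, then $f(s),f(t)$ extend the incomparable nodes $f(u\,^\frown a),f(u\,^\frown b)$ with $a\neq b$ and are therefore incomparable. Hence $f$ is an injective tree embedding.

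It remains to check that $f$ transfers well foundedness in both directions. If $A\notin I_{wf}$, fix an infinite branch $\beta\in\N^\omega$ of $\langle A\rangle$ and, for each $n$, a witness $s_n\in A$ with $\beta\up n\preceq s_n$. Then $f(\beta\up 0)\prec f(\beta\up 1)\prec\cdots$ is an increasing chain of proper extensions whose union is an infinite branch $\gamma$, and each $f(\beta\up n)\preceq f(s_n)\in f[A]$, so $\gamma$ is a branch of $\langle f[A]\rangle$ and $f[A]\notin I_{wf}$.

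The main obstacle is the converse, since the image $f[\N^{<\omega}]$ is a very sparse subset of $D$ and an infinite branch of $\langle f[A]\rangle$ need not pass through image nodes in an obvious way. Suppose $f[A]\notin I_{wf}$ and fix an infinite branch $\gamma\in\N^\omega$ of $\langle f[A]\rangle$ together with witnesses $a_n\in A$ satisfying $\gamma\up n\preceq f(a_n)$. Consider $T=\{s\in\N^{<\omega}:f(s)\preceq\gamma\}$. Because any two elements of $T$ have $\preceq$-comparable images (both are initial segments of $\gamma$) and $f$ preserves incomparability, $T$ is a $\preceq$-chain; moreover $\langle\rangle\in T$. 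The crucial point is that $T$ has no maximal element: given $s\in T$, for $n$ large we have $f(s)\preceq\gamma\up n\preceq f(a_n)$, whence $s\preceq a_n$ by comparability, indeed $s\prec a_n$ once $|f(a_n)|>|f(s)|$, so $s\,^\frown m\preceq a_n$ for $m=a_n(|s|)$; taking $n$ large enough that $|f(s\,^\frown m)|\leq n$ forces $f(s\,^\frown m)\preceq\gamma$, i.e. $s\,^\frown m\in T$. Thus $T$ is an infinite chain and $\beta=\bigcup_{s\in T}s$ is an infinite branch; since every $s\in T$ satisfies $s\preceq a_n\in A$ for suitable $n$, every initial segment of $\beta$ lies in $\langle A\rangle$, so $\beta$ witnesses $A\notin I_{wf}$. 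This establishes $A\in I_{wf}\iff f[A]\in I_{wf}\up D$, and hence $I_{wf}\hookrightarrow I_{wf}\up D$.
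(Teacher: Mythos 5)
Your construction of $f$ and the forward implication are correct, and your overall strategy --- an injection $f:\N^{<\omega}\to D$ that preserves and reflects $\preceq$, obtained from density --- is essentially the same as the paper's. The problem is in the converse direction, at the claim that $T=\{s: f(s)\preceq\gamma\}$ has no maximal element. There the integer $m=a_n(|s|)$ depends on $n$, so the phrase ``taking $n$ large enough that $|f(s^{\frown}m)|\leq n$'' is circular: enlarging $n$ may change $m$, and nothing said so far guarantees that some single $n$ satisfies $|f(s^{\frown}a_n(|s|))|\leq n$. This is not cosmetic: the bare property of being an injective tree embedding ($s\preceq t$ iff $f(s)\preceq f(t)$) does \emph{not} imply that $f$ reflects ill-foundedness. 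For instance, a tree embedding with $f(\langle n\rangle)=\langle 0,\dots,0,1\rangle$ ($n$ zeros followed by a $1$) sends the well founded antichain $\{\langle n\rangle : n\in\N\}$ to a set whose generated tree contains the infinite branch constantly equal to $0$. So at this point you must invoke some additional feature of your particular $f$, and your text does not.

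Fortunately your construction has exactly the feature needed, so the gap is fillable. At the node $s$ you route all children through a single $d\in D$ with $f(s)\preceq d$ and arrange $f(s^{\frown}m)\succeq d^{\frown}m$; thus the children of $f(s)$ all pass through $d$ and are separated at the one coordinate $|d|$. Given a maximal $s\in T$: for all large $n$ you have $s\prec a_n$, hence $f(a_n)\succeq f(s^{\frown}a_n(|s|))\succeq d$; since also $\gamma\up n\preceq f(a_n)$, both $d$ and $\gamma\up n$ are initial segments of $f(a_n)$, and taking $n\geq |d|$ gives $d\preceq\gamma$. Then for every $n>|d|$ the coordinate $|d|$ of $f(a_n)$ equals both $a_n(|s|)$ and $\gamma(|d|)$, so $m:=a_n(|s|)=\gamma(|d|)$ is the \emph{same} for all such $n$; now $n$ may legitimately be taken larger than $|f(s^{\frown}m)|$, and your conclusion $f(s^{\frown}m)\preceq\gamma$ follows. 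With this repair the argument is complete. (For what it is worth, the paper builds a similar embedding and leaves the verification that it transfers well-foundedness to the reader; your attempt to actually carry it out is precisely where the subtlety surfaces, so it is worth recording the fix.)
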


\begin{proof}
Fix a bijection $\varphi: \N^{<\omega} \rightarrow \N$ such that
$\varphi(\emptyset)=0$ and $u\preceq t \Rightarrow
\varphi(u)\leq \varphi(t)$. Let $\psi$ be the inverse of
$\varphi$. Inductively, we are going to define a function $h:
\N^{<\omega}\rightarrow D$ such that $u\preceq t \Leftrightarrow
h(u)\preceq h(t)$. If such  function exists, it is easy to see
that $C\notin I_{wf}\Leftrightarrow h[C]\notin I_{wf}$. Therefore,
$h$ is an isomorphism between $I_{wf}$ and $I_{wf}\up D$.

We define $h(\psi(n))$ by induction on $n$. First, fix any
$d_0\in D$ and put $h(\psi(0))=d_0$. Now, suppose $h(\psi(j) )$ has been
defined  for $j\leq k$. Let $u\in \N^{<\omega}$ and
$i\in \N$ such that $\psi(k+1)=u^{\smallfrown} i$. Put
$D_k=\left\langle \{h(\psi(i)) : i\leq k\} \right\rangle \cup
\bigcup\{ \mathcal{N}_{h(t)} : u\prec t  \textrm{ and
}\varphi(t)\leq k \}$. Since $D_k$ is a $\prec$-downward closed
and $D$ is dense, then we can choose $d\in D\cap
\mathcal{N}_{h(u)}$ that is not in $D_k$. Put $h(\psi(k+1))=d$. It
is routine to verify that  $t\prec \psi(k+1) \Leftrightarrow
h(t)\preceq h(\psi(k+1))$.

\fin
\end{proof}

\begin{lem}
\label{lemma6} Let $A\su\N^{<\omega}$ and $T$ be the tree
generated by $A$. If $I_{wf} \hookrightarrow I_{wf}\up T$, then
$I_{wf}\hookrightarrow I_{wf} \up A$.
\end{lem}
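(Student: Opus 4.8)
The plan is to reduce the statement to finding, inside $A$ itself, a subset order-isomorphic to the full tree $\N^{<\omega}$ under $\preceq$, and to obtain it by lifting the given embedding $g:\N^{<\omega}\hookrightarrow I_{wf}\up T$ up along the extension order. The technical engine I would isolate first is the elementary fact that an \emph{extension map}, i.e.\ any $\pi$ with $t\preceq\pi(t)$, preserves and reflects wellfoundedness: $C\in I_{wf}\Leftrightarrow\pi[C]\in I_{wf}$ for every $C$. The reflecting direction is immediate from $t\preceq\pi(t)$; the preserving direction follows from a short case analysis on a putative branch $\beta$ of $\langle\pi[C]\rangle$, writing $\beta\up n\preceq\pi(c_n)$ with $c_n\in C$ and splitting according to whether $|c_n|\ge n$ to produce a branch of $\langle C\rangle$. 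A companion observation is that a $\preceq$-\emph{embedding} $h:\N^{<\omega}\to A$ (one with $u\preceq t\Leftrightarrow h(u)\preceq h(t)$) is automatically injective and $I_{wf}$-faithful, since it is a tree-isomorphism onto its image and branches correspond. Thus it suffices to build such an $h$ with range in $A$.

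To build it I would run a recursion in the spirit of Lemma~\ref{lemmadensidad}, where the only resource needed at each step is the ability to pass from a node to a strictly longer node that still has room to branch inside $A$. Call $t\in T$ \emph{rich} if $A_t=\{a\in A:t\preceq a\}$ is infinite, and let $T'$ be the (downward closed) set of rich nodes. Two structural facts drive the argument: every infinite branch of $T$ lies entirely in $T'$, because above a non-rich node the tree $T$ is finite; and $T\setminus T'$ is a disjoint union $\bigsqcup_b T_b$ of finite cones over the minimal non-rich nodes $b$. Granting that the range of $g$ can be taken inside $T'$, the recursion is then straightforward: at a rich node I can always choose an $A$-extension avoiding the finitely many values already used (since $A_t$ is infinite) and, using the copy of the full tree furnished by $g$ to locate genuinely branching rich nodes, I can keep the partial map a $\preceq$-embedding. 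Composing with the extension-map engine, the resulting $h$ witnesses $I_{wf}\hookrightarrow I_{wf}\up A$.

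The main obstacle, which I expect to occupy the heart of the proof, is precisely the reduction to the rich subtree together with the control of rich nodes all of whose branching is wasted in non-rich cones. The retraction $\rho:T\to T'$ that fixes $T'$ and collapses each cone $T_b$ to the parent of its root does preserve and reflect $I_{wf}$, so $\rho\circ g$ maps into $T'$; but it may fail to be injective, because a single rich node can be the parent of infinitely many non-rich cones. The danger is that reassigning the nodes of these cones into the infinite set $A_{b^-}$ can manufacture a spurious branch out of a wellfounded set. Overcoming this requires handling each finite cone as a single block, placing the finitely many domain points it receives into $A_{b^-}$ so that across different cones the images do not accumulate along any branch; showing that the hypothesis $I_{wf}\hookrightarrow I_{wf}\up T$ supplies enough genuinely branching rich nodes to carry out this placement while keeping the map simultaneously injective and $I_{wf}$-faithful is the crux.
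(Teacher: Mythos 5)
Your first ``engine'' is correct, and it is in fact exactly the tool the paper's proof uses: for any injection $\pi$ with $t\preceq\pi(t)$ on its domain, $C\in I_{wf}\Leftrightarrow \pi[C]\in I_{wf}$. The fatal gap is in the companion observation. A $\preceq$-embedding $h\colon\N^{<\omega}\to A$ (one with $u\preceq t\Leftrightarrow h(u)\preceq h(t)$) is \emph{not} automatically $I_{wf}$-faithful, so finding a subset of $A$ order-isomorphic to $(\N^{<\omega},\preceq)$ does not suffice. Concretely, there is a $\preceq$-embedding of $\N^{<\omega}$ into itself with $h(\langle n\rangle)=0^{n}{}^{\frown}1$ (a block of $n$ zeros followed by a one, and similarly block-coding longer sequences); it is an order-isomorphism onto its image, yet it sends the wellfounded antichain $\{\langle n\rangle : n\in\N\}$ to $\{0^{n}{}^{\frown}1 : n\in\N\}$, whose generated tree contains the branch $000\cdots$. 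The image of $h$ is not a tree, and $\langle h[C]\rangle$ can acquire ``comb'' branches passing through no element of $h[C]$, so ``branches correspond'' fails precisely in the direction you need, namely $h[C]\notin I_{wf}\Rightarrow C\notin I_{wf}$. Since ``it suffices to build such an $h$ with range in $A$'' is the load-bearing reduction of your entire plan, the argument collapses at this point no matter how the rich-node recursion is carried out.

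The repair is to route faithfulness through the hypothesized embedding $f\colon I_{wf}\hookrightarrow I_{wf}\up T$ rather than through order structure alone, which is what the paper does: it builds a dense set $D\su\N^{<\omega}$ together with an injection $g\colon D\to A$ satisfying $f(d)\preceq g(d)$ and $g(d_{n+1})\not\preceq g(d_i)$ for earlier $i$; injectivity is achievable because for each $u$ the set of $a\in A$ extending some $f(s)$ with $s\succeq u$ is infinite ($f$ being injective and $T=\langle A\rangle$), and $g$ then inherits $I_{wf}$-faithfulness from $f$ by your extension-map engine. One concludes by composing with $I_{wf}\hookrightarrow I_{wf}\up D$. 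Your rich-node analysis and the retraction $\rho$ never reconnect the constructed map to $f$ in a way that could transfer faithfulness --- you use $f$ only ``to locate branching rich nodes'' --- and you yourself flag the simultaneous injectivity-plus-faithfulness step as an unresolved crux. So the proposal is both incomplete on its own terms and built on a false sufficient condition.
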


\begin{proof} Fix $f:\N^{<\omega} \rightarrow T$
witnessing that  $I_{wf} \hookrightarrow I_{wf}\up T$. We will
define functions $h:\N^{<\omega} \rightarrow \N^{<\omega} $ and
$g:D \rightarrow A$ where $D$ is the range of $h$.
To make the proof easier to read, let us fix a bijection
$\psi: \N \rightarrow \N^{<\omega}$ such
that $\psi(0)=\emptyset$. For $n\in\N$, let $d_n=h(\psi(n))$, so
that we  will have $D=\{d_n: n\in\N\}$. The functions $h$ and $g$
will satisfy the following properties:

\begin{enumerate}
\item[(a)] $\psi(n)\preceq h(\psi(n))$, for all $n\in \N$,

\item[(b)] $f(d_n) \preceq g(d_n)$, for all $n\in \N$,
\item[(c)] $g(d_{n+1}) \npreceq g(d_i)$, for all $i\leq n$.
\end{enumerate}

From  property $(a)$ we get that $D$ is dense in $\N^{<\omega}$
and hence, by Lemma \ref{lemmadensidad}, we get that
$I_{wf}\hookrightarrow I_{wf}\up D$.

We claim that properties $(b)$ and $(c)$ implies that
$I_{wf}\up D\stackrel{g}{\hookrightarrow} I_{wf}\up A$. By $(c)$,
it is clear that $g$ is $1-1$. Now we show that $D \supseteq
C\notin I_{wf}$ iff $g[C]\notin I_{wf}$:

\begin{description}
\item[$(\Rightarrow)$] if $C\su D$ is not in $I_{wf}$, then
$f[C]\notin I_{wf}$ and so there are a sequence $\lambda\in
\N^{\omega}$ and an infinite set $\{c_{l}: l\in \N\}\su C$ such
that for all $l\in\N$, $\lambda\up l \preceq f(c_{l})$. But
$f(c_{l})\preceq g(c_{l})$ for all $l\in\N$. Thus, $g[C]\notin
I_{wf}$.

\item[$(\Leftarrow)$] if $g[C]\notin I_{wf}$ there are a sequence
$\eta\in\N^{\omega}$ and an infinite set $\{c_{l}: l\in \N\}\su C$
such that for all $l\in\N$, $\eta\up l \preceq g(c_{l})$. Notice that
$\{f(c_{l}):\, l\in \N\}$ is infinite (as $f$ is $1-1$). Since $f(c_{l}) \preceq g(c_{l})$ for all
$l\in\N$, then the length of the $f(c_{l})$'s must increase with $l$. Hence, $\eta\in \langle \{ f(c_{l}) : l\in \N \}\rangle$ and so $f[C]\notin I_{wf}$. Therefore $C\notin I_{wf}$, as $f$ is an isomorphism.
\end{description}

In summary, assuming that such functions $h$ and $g$ exist, we have that
$I_{wf}\hookrightarrow I_{wf}\up D$ and $I_{wf}\up
D\hookrightarrow I_{wf}\up A$. Thus, $I_{wf} \hookrightarrow
I_{wf}\up A$.

So it remains to show the construction of  $h$ and
$g$. We will define $h(\psi(n))$ and $g(d_n)$ by induction on $n$.

Pick $a\in A$ such that $f(\emptyset)\preceq a$, and let $h(\psi(0))=d_0= h(\emptyset)=\emptyset$
and $g(d_0)=a$. Note that $\emptyset=\psi(0)\preceq h(\psi(0))$
and $f(d_0)\preceq g(d_0)$. Suppose we have defined
$h(\psi(n))$ and $g(d_n)$ with the desired properties. We claim that
\begin{equation}\label{eqlemma6.2}
(\exists s\in \mathcal{N}_{\psi(n+1)} ) (\exists a\in A) [
f(s)\preceq a  \textrm{ \, and \, } (\forall i\leq n) (a\npreceq
g(d_i))].
\end{equation}
Indeed, since the set $\mathcal{N}_{\psi(n+1)}$ is infinite and
$f$ is $1-1$, we have that $f[\mathcal{N}_{\psi(n+1)}]$ is infinite. In
addition, $f[\mathcal{N}_{\psi(n+1)}] \su \{a\in A : (\exists s
\succeq \psi(n+1)) ( f(s)\preceq a) \} = P$. Hence, $P$ is
infinite. This fact allows us to pick $a\in P$ and $s\in \mathcal{N}_{\psi(n+1)}$
satisfying \eqref{eqlemma6.2}. Finally, we put
$h(\psi(n+1))=d_{n+1}=s$ and $g(d_{n+1})=a$.
\fin
\end{proof}

\bigskip

As mentioned above, when analyzing arbitrary restrictions of $I_{wf}$,
one realizes that the first kind of restriction to understand is the
restriction to trees. To deal with trees we will define a derivative on subsets of
$\N^{<\omega}$. Let $A\su \N^{<\omega}$, we define
\[
A^{\prime}=\{ a\in A :A_{a} \notin I_d \}.
\]
For a successor ordinal, we put $A^{(\beta
+1)}= (A^{(\beta}))^{\prime}$ and for a limit ordinal $\alpha$ we put
$A^{(\alpha)}=\bigcap\limits_{\xi<\alpha} A^{(\xi)}$. The rank of $A$,
denoted  $rk(A)$,  is the first ordinal $\alpha$ such that
$A^{(\alpha)}=A^{(\alpha + 1)}$.

Given a tree $T\su \N^{<\omega}$, we notice that $T^{rk(T)}$ is a tree without
terminal nodes.

\begin{lem}
\label{lemma3}
Suppose $\alpha=\mu +1$ and let $T$ be a tree
with rank $\alpha$ and such that $T^{(\alpha)}=\emptyset$.

\begin{enumerate}
\item[(i)]  If $t\in T$, then $rk(T_{t})=rk(T_{t}\cup \{s : s\preceq t\})$.

\item[(ii)] The tree $H=\{t\in T : rk(T_t)=\alpha\}$ is in $I_d$.
\end{enumerate}
\end{lem}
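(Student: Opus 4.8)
The plan is to prove both parts from two structural facts about the derivative $A\mapsto A^{(\prime)}$, each established by transfinite induction. Throughout write $A_t=\{s\in A:t\preceq s\}$ and recall that $A\in I_d$ iff $A$ is dominated by a single branch, so that $I_d$ is an ideal to which adding a finite set changes nothing. The first fact is that the derivative is \emph{downward closed relative to $A$}: if $a\in A^{(\prime)}$ and $b\in A$ with $b\preceq a$, then $A_b\supseteq A_a\notin I_d$, whence $b\in A^{(\prime)}$. For a subtree $T_t$ (which is downward closed inside the cone $\{s:t\preceq s\}$ and has least element $t$) this propagates to every iterate, so every nonempty $(T_t)^{(\beta)}$ contains its root $t$; since limit iterates are intersections, the sequence $\big((T_t)^{(\beta)}\big)_\beta$ can never first reach $\emptyset$ at a limit ordinal. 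This last consequence is what prevents a finite stem from producing a spurious extra derivative step, and it is the point I expect to be the crux. The second fact is the \emph{cone identity}
\[
T^{(\beta)}\cap\{s:t\preceq s\}=(T_t)^{(\beta)}\qquad(\forall\,\beta),
\]
proved by induction on $\beta$ from the facts that $A^{(\prime)}$ is computed locally from the cones $A_a$ and that $T_a=(T_t)_a$ whenever $t\preceq a$.

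For part (i), put $B=T_t\cup\{s:s\preceq t\}$; this is a genuine tree with $B_t=T_t$, so by the cone identity $B^{(\beta)}\cap\{s:t\preceq s\}=(T_t)^{(\beta)}$. It then remains to track the finitely many stem nodes $a\prec t$. For such an $a$ one has $(B^{(\beta)})_a=(T_t)^{(\beta)}\cup F$ with $F$ finite, hence $(B^{(\beta)})_a\in I_d$ iff $(T_t)^{(\beta)}\in I_d$ iff $t\notin(T_t)^{(\beta+1)}$; thus a stem node survives into $B^{(\beta+1)}$ exactly when $(T_t)^{(\beta+1)}\neq\emptyset$. Combining the two observations gives, for every $\gamma$, that $B^{(\gamma)}=(T_t)^{(\gamma)}\cup\{s:s\prec t\}$ when $(T_t)^{(\gamma)}\neq\emptyset$ and $B^{(\gamma)}=\emptyset$ otherwise; it is precisely in the limit case of this induction that the no--first--empty--at--limit fact is needed. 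Since adding or removing the fixed finite set $\{s:s\prec t\}$ cannot change whether two consecutive iterates coincide, the sequences $(B^{(\gamma)})_\gamma$ and $\big((T_t)^{(\gamma)}\big)_\gamma$ stabilize at the same ordinal, that is, $rk(B)=rk(T_t)$.

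For part (ii), I would use the cone identity to identify $H$ outright. Since $T_t\su T$, the identity gives $(T_t)^{(\alpha)}=T^{(\alpha)}\cap\{s:t\preceq s\}=\emptyset$, so each $T_t$ has empty kernel reached by stage $\alpha=\mu+1$; consequently $rk(T_t)=\alpha$ iff $(T_t)^{(\mu)}\neq\emptyset$ iff $t\in(T_t)^{(\mu)}$ (downward closure) iff $t\in T^{(\mu)}$ (cone identity). Hence $H=T^{(\mu)}$. Finally, the hypotheses $rk(T)=\mu+1$ and $T^{(\alpha)}=\emptyset$ give $(T^{(\mu)})^{(\prime)}=T^{(\mu+1)}=\emptyset$, which by the definition of the derivative means $(T^{(\mu)})_a\in I_d$ for every $a\in T^{(\mu)}$; taking $a$ to be the root $\emptyset$ of $T^{(\mu)}$ yields $T^{(\mu)}\in I_d$, and therefore $H=T^{(\mu)}\in I_d$.

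The main obstacle, as flagged, is the limit-stage bookkeeping in part (i): a naive induction would allow the short stem below $t$ to survive one step longer than $T_t$ and thereby inflate the rank by one. The downward-closure observation — that a nonempty iterate of a cone-tree always contains its root, so such trees exhaust their derivative only at successor stages — is exactly what rules this out, and it is also implicitly what makes the ambient rank $\alpha=\mu+1$ a successor in the hypothesis consistent with the construction.
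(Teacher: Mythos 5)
Your proposal is correct. Part (i) is essentially the paper's argument: the authors also observe that $t\in T_t^{(\xi)}$ for all $\xi<rk(T_t)$ (forcing the rank of a cone to be a successor) and assert the identity $S^{(\eta)}=T_t^{(\eta)}\cup\{s:s\preceq t\}$ for $S=T_t\cup\{s:s\preceq t\}$; you merely spell out the transfinite induction behind that identity, including the limit stages and the bookkeeping of when the stem nodes drop out, which the paper leaves to the reader. Part (ii), however, is a genuinely different route. The paper argues by contradiction: if $H\notin I_d$ there is $t\in H$ with infinitely many immediate successors $t^\smallfrown n$ in $H$, and part (i) is used to assemble an auxiliary tree $L=\bigcup_{n\in K}T^{\ast}_{t^\smallfrown n}$ whose rank exceeds $\alpha$, contradicting $rk(T)=\alpha$. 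You instead prove the exact identity $H=T^{(\mu)}$ from the cone identity $T^{(\beta)}\cap\{s:t\preceq s\}=(T_t)^{(\beta)}$ together with the relative downward closure of the derivative, and then read off $H\in I_d$ directly from $(T^{(\mu)})^{(\prime)}=T^{(\mu+1)}=\emptyset$ applied at the root. I checked the two supporting facts and the chain of equivalences $rk(T_t)=\alpha\Leftrightarrow (T_t)^{(\mu)}\neq\emptyset\Leftrightarrow t\in T^{(\mu)}$; they are sound. Your version of (ii) does not even use part (i), gives the sharper structural statement $H=T^{(\mu)}$, and avoids the paper's auxiliary construction (where, incidentally, the inclusion $L\su H$ and the inequality $rk(L)\leq rk(H)$ as printed should read $L\su T$ and $rk(L)\leq rk(T)$); the paper's approach, on the other hand, needs only the single derivative at $t$ rather than the full cone identity at every stage.
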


\begin{proof}
(i) If we let $\gamma=rk(T_t)$, since $T^{(\alpha)}=\emptyset$, then
$T_{t}^{(\gamma)}=\emptyset$. Notice also that $t\in T_{t}^{(\xi)}$ for
all $\xi<\gamma$, whence $\gamma$ is a successor ordinal. Let $\eta$ be so that
$\gamma=\eta + 1$ and put $S=T_{t}\cup \{s : s\preceq t\}$. Then
$S^{(\eta)}=T_{t}^{(\eta)}\cup \{s : s\preceq t\}$ and the result
follows.

(ii) Suppose $H\not\in I_d$. Then there is  $t\in H$ such that
$K=\{n: t^\smallfrown n\in H\}$ is infinite. For every $n\in K$
the set $T_{t^\smallfrown n}$ has rank $\alpha$; so by part $(i)$,
$T^{\ast}_{t^\smallfrown n}=\{s:s\preceq t^\smallfrown n\}\cup
T_{t^\smallfrown n}$ has rank $\alpha$. Consider the tree
$L=\bigcup_{n\in K} T^{\ast}_{t^\smallfrown n}\su H$. We claim
that $t\in L^{(\alpha)}$. In fact, as   $T_{t^\smallfrown
n}^{(\mu)}\su L_t^{(\mu)}$, then ${t^\smallfrown n}\in
L_t^{(\mu)}$ for all $n\in K$ and thus  $L_t^{(\mu)}\not\in I_d$.
Hence $\alpha< rk(L)\leq rk(H)=\alpha$ and this is a
contradiction. \fin
\end{proof}

\begin{lem}
\label{lemma4}
Let $H$ be an infinite tree in $I_d$. For every $s\in H$, let
$P_s\su \N^{<\omega}$ be a set consisting of extensions of $s$ such that $P_s
\cap H=\emptyset$ and $(P_s)_{s\in H}$ is pairwise
disjoint.   Let $P= \bigcup_{s\in H}P_s$ and  $R=H\cup P$.
If $I_{wf}\up P_s \in \mathcal{B}$ for all $s\in H$, then $I_{wf}\up R\in \mathcal{B}$.
\end{lem}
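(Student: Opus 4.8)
The plan is to obtain an explicit description of the ideal $I_{wf}\up R$ in terms of the pieces $I_{wf}\up P_s$ and then recognize it as a member of $\base$ assembled by the two operations under which $\base$ is closed. The key structural observation is that, since $H\in I_d$, the tree $H$ is dominated by some branch $\alpha\in\N^\omega$ and is therefore finitely branching; consequently, by K\"onig's lemma, every infinite subtree of $H$ has an infinite branch. In particular $H$ itself, being infinite, is ill founded, and so is the tree generated by any infinite subset of $H$.

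Using this, I would prove the following characterization: for $A\su R$,
\[
A\in I_{wf}\up R \iff A\cap H \text{ is finite},\ \{s\in H: A\cap P_s\neq\emptyset\}\text{ is finite, and } A\cap P_s\in I_{wf}\ \forall s.
\]
Sufficiency is immediate, since under these hypotheses $A$ is a finite union of a finite set and finitely many well founded sets, hence well founded. For necessity I would argue contrapositively in two steps. If $A\cap H$ is infinite, then $\langle A\cap H\rangle\su H$ is an infinite finitely branching tree, hence ill founded, so $A\notin I_{wf}$. If instead $S=\{s: A\cap P_s\neq\emptyset\}$ is infinite, then $\langle S\rangle\su H$ is again an infinite finitely branching tree, and K\"onig's lemma yields a branch $\beta$; for each $n$ there is $s\in S$ with $\beta\up n\preceq s$, and picking any $a\in A\cap P_s$ (so $s\preceq a$) we get $\beta\up n\preceq a$, whence $\beta$ is a branch of $\langle A\rangle$ and again $A\notin I_{wf}$. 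The last condition $A\cap P_s\in I_{wf}$ is necessary because subsets of well founded sets are well founded.

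Once the characterization is in hand, the conclusion is essentially bookkeeping. Writing $J_s=I_{wf}\up P_s$ and using that $\{P_s:s\in H\}$ is a partition of $P$, Lemma \ref{lemma1}(ii) together with the fact that every $J_s$ is Fr\'echet (being a member of $\base$, so $J_s^{\perp\perp}=J_s$) shows that the condition ``$S$ finite and $A\cap P_s\in J_s$ for all $s$'' describes exactly the ideal $(\bigoplus_{s\in H} J_s^\perp)^\perp$ on $P$. Since moreover $\{H\}\cup\{P_s:s\in H\}$ is a partition of $R$ and the trace on $H$ is just $\FIN$, the characterization becomes
\[
I_{wf}\up R \cong \FIN \oplus \Big(\bigoplus_{s\in H} J_s^\perp\Big)^{\perp}.
\]
Because each $J_s\in\base$ and $\base$ is closed under orthogonal and under (countable and finite) direct sums, every ideal on the right belongs to $\base$, and therefore so does $I_{wf}\up R$. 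I expect the main obstacle to be the necessity half of the characterization, precisely the K\"onig's lemma argument showing that an infinite family of nonempty $P_s$'s (or an infinite $A\cap H$) manufactures an infinite branch of $\langle A\rangle$; once that is secured, the identification with $\FIN\oplus(\bigoplus_s J_s^\perp)^{\perp}$ and the closure properties of $\base$ finish the proof routinely.
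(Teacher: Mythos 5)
Your proof is correct and follows essentially the same route as the paper: the same identification of $I_{wf}\up R$ with $\FIN\oplus\bigl(\bigoplus_{s\in H}J_s^{\perp}\bigr)^{\perp}$ via Lemma \ref{lemma1}(ii), hinging on the same key claim that a well founded $A$ meets only finitely many of the $P_s$. The only cosmetic difference is that you justify that claim by K\"onig's lemma applied to the finitely branching tree $H$, while the paper appeals directly to $H\in I_d=I_{wf}^{\perp}$; your explicit remark that Fr\'echetness of the $J_s$ is what lets one replace $J_s^{\perp\perp}$ by $J_s$ is a point the paper leaves implicit.
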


\begin{proof}
We will first proof that $I_{wf}\up P\in
\mathcal{B}$. We claim that
\begin{equation}
\label{eqlema4}
A\in I_{wf}\up P \Leftrightarrow (\exists
t_0,\dots,t_p\in H)(A\su \bigcup_{i\leq p}P_{t_i}
\textrm{ and } (\forall i\leq p)\; A\cap P_{t_i} \in I_{wf}).
\end{equation}
In fact, let $A\su \bigcup_{s\in H}P_s$ with $A\in I_{wf}$. By the definition of $I_{wf}$, the tree generated by $A$, denoted $\langle A\rangle $, belongs to $I_{wf}$. Notice also that $H\subseteq \langle \bigcup_{s\in H} P_s \rangle$.  If $A$ meets infinitely many $P_{s}$'s, then $\langle A \rangle $ has infinite
many elements of $H$ (because $P_s\cap P_r=\emptyset$ for $s\neq
r$) and then $\langle A\rangle\notin I_{wf}$ (because $H\in I_d=I_{wf}^\perp$). Thus, $\{s\in
H : A\cap P_s\neq \emptyset\}$ is finite. Put $\{t_0,\dots,
t_p\}=\{s\in H : A\cap P_s\neq \emptyset\}$. Then,
$A\su\bigcup_{i\leq p} P_{t_i}$.  The reverse implication is trivial as $I_{wf}$ is an ideal.

So we have established \eqref{eqlema4}. By Lemma \ref{lemma1} and the fact that $J_s=I_{wf}\up P_{s}\in \mathcal{B}$ for $s\in H$, we have that
\[
I_{wf}\up P \cong \left(\oplus_{s\in H}
J_{s}^{\perp}\right)^{\perp}\in \mathcal{B}.
\]

Finally, since $H\in I_d$ and $H\cap P=\emptyset$ we have that
\[
I_{wf}\!\!\up \! R\; \cong \; I_{wf}\!\!\up\! H\; \oplus \;
I_{wf}\!\!\up \!P\; \cong\; {\FIN} \oplus I_{wf}\!\!\up \! P \in
\mathcal{B}.
\]

\fin
\end{proof}

\begin{lem}
\label{lemma5}
Let $T$ be tree on $\N$ and  $\alpha=rk(T)$.
\begin{enumerate}
\item[(i)] If $T^{\alpha} \neq \emptyset$, then $I_{wf}
\hookrightarrow I_{wf}\up T$.

\item[(ii)] If $T^{\alpha} = \emptyset$, then $I_{wf}\up T \in
\mathcal{B}$.
\end{enumerate}
\end{lem}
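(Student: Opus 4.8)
The plan is to prove both parts by transfinite induction on $\alpha=rk(T)$, treating them together since they feed into each other through the recursive structure of $\base$. The derivative $A^{(\prime)}$ was defined precisely so that $rk(T)$ measures how deeply the tree fails to be dominated by a branch; part (i) says that a tree whose rank is ``stably nonempty'' (i.e. $T^{\alpha}\neq\emptyset$) is rich enough to contain a full copy of $I_{wf}$, while part (ii) says that when the derivative eventually empties out, the restriction $I_{wf}\up T$ is tame, landing in $\base$. The base case $\alpha=0$ is immediate: $rk(T)=0$ means $T=T^{(\prime)}$, so either $T^0=T\neq\emptyset$ (then $T\notin I_d$, and a straightforward construction embeds $I_{wf}$, giving (i)) or $T=\emptyset$ (giving (ii) trivially).

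For part (i), when $T^{\alpha}\neq\emptyset$ I would fix $t\in T^{\alpha}$. By definition of the derivative, $T_t\notin I_d$ and this persists through all derivatives up to $\alpha$, so $t$ has infinitely many immediate successors $t^\frown n$ each lying in $T^{\alpha}$, or more carefully, each $T_{t^\frown n}$ retains high rank. The idea is to build an embedding $h\colon\N^{<\omega}\to T$ recursively, mapping $\emptyset$ to $t$ and, having placed a node at some $h(u)\in T^{\alpha}$, using the fact that $(T_{h(u)})^{\alpha}\neq\emptyset$ to find infinitely many extensions in $T^{\alpha}$ to serve as the images of $h(u^\frown n)$; this yields $u\preceq s \Leftrightarrow h(u)\preceq h(s)$, which as in Lemma \ref{lemmadensidad} witnesses $I_{wf}\hookrightarrow I_{wf}\up T$. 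The forward implications $(i)\Rightarrow(ii)\Rightarrow(iii)$ of Theorem \ref{theorem6} guarantee this is the genuinely non-Borel situation.

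For part (ii), assume $T^{\alpha}=\emptyset$ and handle the successor case $\alpha=\mu+1$ as the heart of the argument (limit $\alpha$ reduces to the successor stages). Here Lemma \ref{lemma3} does the essential work: the tree $H=\{t\in T: rk(T_t)=\alpha\}$ lies in $I_d$. Each node $t\in H$ that is $\preceq$-minimal among its peers, together with the pieces of $T$ hanging below it of rank $\leq\mu$, gives sets $P_t$ of extensions with $P_t\cap H=\emptyset$ on which $I_{wf}\up P_t$ has rank strictly below $\alpha$, so by the inductive hypothesis $I_{wf}\up P_t\in\base$. Setting $P=\bigcup_{t\in H}P_t$ and $R=H\cup P$, Lemma \ref{lemma4} then delivers $I_{wf}\up R\in\base$. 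The final step is to check $T$ itself decomposes this way, i.e. that $T$ is covered (up to a well-founded, hence $I_d$, remainder) by $H$ together with the lower-rank cones $P_t$; the identity $I_{wf}\up T\cong \FIN\oplus I_{wf}\up P$ from the proof of Lemma \ref{lemma4} closes it.

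The main obstacle I anticipate is the bookkeeping in part (ii): verifying that the cones below the high-rank skeleton $H$ genuinely have rank $\leq\mu$ and that $(P_t)_{t\in H}$ can be chosen pairwise disjoint with $P_t\cap H=\emptyset$ while still covering all of $T\setminus H$. The subtlety is that removing $H$ from $T$ does not leave a disjoint union of cones in an obvious way — one must argue, using Lemma \ref{lemma3}(i) on the rank of cones $T_t$ versus $T_t\cup\{s:s\preceq t\}$, that each component of $T$ below a minimal node of $H$ drops strictly in rank, so the inductive hypothesis applies. Getting this partition exactly right, and confirming the limit case genuinely reduces to successors (rather than requiring a separate diagonal argument), is where the real care is needed.
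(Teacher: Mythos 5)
Your outline follows the paper's proof essentially step for step: part (i) by recursively embedding $\N^{<\omega}$ into the perfect kernel $T^{\alpha}$ using $(T^{\alpha})_t\notin I_d$, and part (ii) by induction on $\alpha$ via the skeleton $H=\{t\in T: rk(T_t)=\alpha\}$ together with Lemmas \ref{lemma3}, \ref{lemma2}(i) and \ref{lemma4}, decomposing $T$ as $H\cup\bigcup_{s\in H}P_s$ with $P_s=\bigcup_{n\in M_s}T_{s^{\frown}n}$ and $M_s=\{n: rk(T_{s^{\frown}n})<\alpha\}$. Two points in your sketch need repair, both of which you partly flag yourself. First, in (i) a node $t\in T^{\alpha}$ need \emph{not} have infinitely many immediate successors in $T^{\alpha}$, and the hedge ``each $T_{t^{\frown}n}$ retains high rank'' does not fix this; what $(T^{\alpha})_t\notin I_d$ actually gives is that $T^{\alpha}$ is not finitely branching above $t$, so \emph{some} $s\succeq t$ has infinitely many immediate successors $s^{\frown}n\in T^{\alpha}$. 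Moreover the images of the children $u^{\frown}n$ must be chosen as such siblings $s^{\frown}n$ of a common parent, not as arbitrary extensions of $h(u)$ in $T^{\alpha}$: otherwise a well-founded antichain can be sent to a set generating an ill-founded tree, and the map would fail to reflect membership in $I_{wf}$.

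Second, the limit case of (ii) does not ``reduce to the successor stages'' by any argument you would still need to supply --- it is vacuous. Each derivative $T^{\xi}$ of a tree is again a tree, so if $T^{\xi}\neq\emptyset$ for all $\xi<\alpha$ (which holds for $\xi<rk(T)$), then every $T^{\xi}$ contains the root, and hence so does $T^{\alpha}=\bigcap_{\xi<\alpha}T^{\xi}$ when $\alpha$ is limit. Thus $T^{\alpha}=\emptyset$ forces $\alpha$ to be a successor, which is exactly how the paper disposes of this case. With these two corrections, and with the covering claim $T=H\cup\bigcup_{s\in H}P_s$ verified by passing to the minimal initial segment of a node at which the rank of the cone drops (your Lemma \ref{lemma3}(i) point), your argument coincides with the paper's.
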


\begin{proof}
(i) Suppose $T^{\alpha}\neq \emptyset$. Since
$T^{\alpha}=T^{\alpha + 1}$,  then $T^\alpha$ is a tree without
 terminal nodes. Moreover, given $t\in T^\alpha$, as
$(T^{\alpha})_{t}\notin I_d$, we have that

\begin{equation}\label{eqlema5}
(\forall t\in T^\alpha)(\exists s\succeq t)(A(t,s)=\{ n : s^{\smallfrown}n \in (T^{\alpha})_{t} \} \textrm{ \, is infinite.})
\end{equation}

For each $t\in T^\alpha$, if $s$ is chosen as in \eqref{eqlema5}, we consider the increasing
enumeration of $A(t,s)=\{n_{0}^{(t,s)}<\cdots<n_{k}^{(t,s)}<\cdots\}$.
We can define now an injection  $f:\N^{<\omega} \rightarrow \N^{<\omega}$
by induction on the levels of $\N^{<\omega}$, witnessing $i)$. Lets start defining
$f(\emptyset)=\emptyset$. By \eqref{eqlema5}, there is some $s_{\emptyset}\succeq \emptyset$
so that the set $A(\emptyset,s_{\emptyset})$ is infinite. We define $f$ on the first level
by $f\langle k \rangle=s_{\emptyset}^{\smallfrown} n_{k}^{(\emptyset,s_{\emptyset})}$.
We will define $f$ on one more level to make the construction clear. For every $k\in\N$, choose
$s_{f\langle k \rangle}\succeq f\langle k \rangle$ as in~\eqref{eqlema5}. For each $l\in\N$, we
define $f(\langle k, l\rangle)= s_{f\langle k \rangle} \;  ^{\smallfrown} n_{l}^{(f\langle k \rangle,s_{f\langle k \rangle})}$.

It is easy to check that $f$ is 1-1 and  $A\su \N^{<\omega}$ is
well founded iff $f[A]\in I_{wf}$. Therefore
$I_{wf}\hookrightarrow I_{wf}\up T$.

\medskip

(ii) We will see that $I_{wf}\up T \in \mathcal{B}$, whenever
$T^{\alpha}=\emptyset$, by induction on $\alpha$. If
$T^{\prime}=\emptyset$, then $T\in I_d$ and therefore $I_{wf}\up T
\cong \FIN\in \mathcal{B}$. Now suppose that for every
$\xi<\alpha$ and for every tree $S$ with $rk(S)=\xi$, if
$S^{(\xi)}=\emptyset$ then $I_{wf}\up S \in \mathcal{B}$. Take a
tree $T$ with rank $\alpha$ and such that
$T^{(\alpha)}=\emptyset$. Notice that $\alpha$ cannot be a limit
ordinal, so let $\beta$ be  such that $\alpha=\beta + 1$. Consider
the set
\[
H=\{ t \in T : rk(T_t)=\alpha \}.
\]
By Lemma \ref{lemma3}, $H$ is a tree in $I_d$. For every $s\in H$,
let
\[
M_{s}=\{ n\in \N : rk(T_{s^\smallfrown n})<\alpha \}.
\]
By  Lemma \ref{lemma3}, the tree $T_{s^\smallfrown n}\cup \{u :
u\preceq s^\smallfrown n\}$ has rank smaller than $\alpha$ for
every $s\in H$ and $n\in M_{s}$. Therefore, by the inductive
hypothesis, $I_{wf}\up T_{s^\smallfrown n} \in \mathcal{B}$ for
every $s\in H$ and $n\in M_{s}$. Put $P_{s}=\cup_{n\in M_s}
T_{s^\smallfrown n}$. From Lemma \ref{lemma2}(i) we have that
$I_{wf}\up P_{s}\in \mathcal{B}$. We claim that
\[
T= H \cup \bigcup_{s\in H}P_s.
\]
In fact, only one inclusion needs a proof. Let $t\in T$. If
$rk(T_t)=\alpha$, then $t\in H$. If $rk(T_t)<\alpha$, let $t^{\prime}$ be
the minimal initial segment of $t$ such that
$rk(T_{t^{\prime}})<\alpha$. Notice that $l_0=|t^{\prime}|>0$.
Therefore, the minimality of $t^{\prime}$ implies that $s=t^{\prime}\up(l_0 - 1)\in H$.
Then  $t\in P_{s}$  and $s\in H$.

Thus, $T= H \cup \bigcup_{s\in H} P_{s}$, where $P_s\cap
H=\emptyset$ and, $I_{wf}\up P_{s}\in \mathcal{B}$ for every $s\in
H$. If $H$ is a finite set , then
\[
I_{wf}\up T\cong I_{wf}\up \bigcup_{s\in H} P_s\cong \oplus_{s\in
H} I_{wf}\up P_s\in \mathcal{B}.
\]
If $H$ is infinite,  applying Lemma \ref{lemma4} we get that
$I_{wf}\up T\in \mathcal{B}$. \fin
\end{proof}

\bigskip

\noindent {\bf Proof of Theorem \ref{theorem6}} It remains to show
that (iii) implies (i). Suppose $I_{wf} \not\hookrightarrow
I_{wf}\up A$. Let $T$ be the tree generated by $A$. By Lemma
\ref{lemma6} we have $I_{wf} \not\hookrightarrow I_{wf}\up T$.
Thus, from Lemma \ref{lemma5} we conclude  that $I_{wf}\up T\in
\mathcal{B}$. Hence,  by  Lemma \ref{restricciones}, $I_{wf}\up
A\cong I_{wf}\up (T\cap A)\in \mathcal{B}$. \fin

\bigskip

By taking orthogonal, we get the following immediate consequence
of Theorem \ref{theorem6}.

\begin{coro}\label{coro2} Let $A$ be a subset of $\N^{<\omega}$. Then
$I_{d} \not\hookrightarrow I_{d}\up A$ iff $I_{d}\up A\in
\mathcal{B}$.
\end{coro}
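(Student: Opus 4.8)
The plan is to deduce the corollary from Theorem \ref{theorem6} by transporting everything across the orthogonal, so that the whole argument reduces to two structural observations together with the already established equivalence (i)$\iff$(iii).

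First I would record that restriction commutes with orthogonality and that $I_{wf}$ is Fr\'echet. By lemma \ref{restricciones} we have $(I_{wf}\up A)^\perp\cong I_{wf}^\perp\up A=I_d\up A$; symmetrically $(I_d\up A)^\perp\cong I_d^\perp\up A$. To rewrite the last expression as $I_{wf}\up A$ I need $I_d^\perp=I_{wf}$, i.e. $I_{wf}=I_{wf}^{\perp\perp}$. The inclusion $I_{wf}\su I_d^\perp$ is immediate: a well founded set dominated by a branch lies in a finitely branching well founded tree, hence is finite by K\"onig's lemma. For the reverse, if $B\notin I_{wf}$ then $\langle B\rangle$ has an infinite branch $\beta$; choosing for each $n$ an element $b_n\in B$ with $\beta\up n\preceq b_n$, the infinite set $\{b_n:n\in\N\}$ is dominated by $\alpha(i)=\max(\beta(i),\max_{n\le i}b_n(i))$ (note $b_n(i)=\beta(i)$ whenever $n>i$), so it witnesses $B\notin I_d^\perp$. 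Thus $I_d^\perp=I_{wf}$ and $(I_d\up A)^\perp\cong I_{wf}\up A$.

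Second, I would observe that a single bijection dualizes: if $f:X\to Y$ satisfies $C\in I\Leftrightarrow f[C]\in J$ for all $C\su X$, then, since $f$ is a bijection, $|C\cap D|=|f[C]\cap f[D]|$ and $C\mapsto f[C]$ carries $I$ onto $J$, whence $D\in I^\perp\Leftrightarrow f[D]\in J^\perp$. Applying this to an isomorphic embedding $f$ of $I_{wf}$ onto $I_{wf}\up B$ (with $B=f[\N^{<\omega}]\su A$) shows that $f$ is simultaneously an isomorphism of $I_d=I_{wf}^\perp$ onto $(I_{wf}\up B)^\perp\cong I_d\up B$, and conversely using $I_d^\perp=I_{wf}$. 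Hence $I_{wf}\hookrightarrow I_{wf}\up A$ iff $I_d\hookrightarrow I_d\up A$. The same two orthogonal identities, combined with the closure of $\base$ under orthogonal, give $I_{wf}\up A\in\base$ iff $I_d\up A\in\base$.

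Finally I would chain these equivalences with Theorem \ref{theorem6}: $I_d\up A\in\base$ iff $I_{wf}\up A\in\base$ iff $I_{wf}\not\hookrightarrow I_{wf}\up A$ (this is (i)$\iff$(iii)) iff $I_d\not\hookrightarrow I_d\up A$, which is exactly the assertion. The only non-formal point, and the one I expect to require care, is the verification that $I_{wf}$ is Fr\'echet, i.e. $I_d^\perp=I_{wf}$; once that identity is in hand, the rest is a routine transport of Theorem \ref{theorem6} across the orthogonal.
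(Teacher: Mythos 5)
Your proof is correct and follows the same route as the paper, whose entire argument is the one-line remark that the corollary is obtained ``by taking orthogonal'' from Theorem \ref{theorem6}. The only substantive content you add is the explicit verification that $I_{d}^{\perp}=I_{wf}$ (i.e.\ that $I_{wf}$ is Fr\'echet), a fact the paper uses implicitly without stating; your K\"onig's-lemma and branch-selection argument for it is correct, as is the observation that a single bijection witnessing an isomorphism of ideals also witnesses an isomorphism of their orthogonals.
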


From the previous result and Theorem \ref{theorem4} we get the following.
\begin{coro}
If $I_{wf}\upharpoonright A$ is Borel, then $I_{wf}\upharpoonright
A$ is $F_{\sigma\delta}$.
\end{coro}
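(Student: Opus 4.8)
The plan is to obtain this as an immediate consequence of Theorem \ref{theorem6} together with Theorem \ref{theorem4}, with no new argument required. The strategy is simply to chain the equivalence of Theorem \ref{theorem6} with the complexity bound already established in Theorem \ref{theorem4}.

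First I would invoke Theorem \ref{theorem6} applied to the set $K\su\N^{<\omega}$. By hypothesis $I_{wf}\up K$ is Borel, which is exactly condition (ii) of that theorem. Since Theorem \ref{theorem6} asserts the equivalence of (i), (ii) and (iii), we conclude that condition (i) holds as well, namely $I_{wf}\up K$ belongs to $\mathcal{B}$ (up to isomorphism, as is our standing convention for membership in $\base$).

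Next I would appeal to the last assertion of Theorem \ref{theorem4}, which states that every member of $\mathcal{B}$ has Borel complexity at most $F_{\sigma\delta}$. Applying this to the ideal $I_{wf}\up K\in\mathcal{B}$ yields that $I_{wf}\up K$ is $F_{\sigma\delta}$, which is precisely the desired conclusion. Since isomorphic copies of an ideal share the same Borel complexity, the fact that membership in $\base$ is taken up to isomorphism causes no difficulty.

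There is essentially no obstacle here: the entire content of the corollary is already packaged in the two cited theorems, and the proof consists only in observing that ``Borel'' forces membership in $\mathcal{B}$ (via Theorem \ref{theorem6}) and that membership in $\mathcal{B}$ forces the $F_{\sigma\delta}$ bound (via Theorem \ref{theorem4}). The only point worth stating explicitly is the transfer of the complexity bound across the isomorphism witnessing $I_{wf}\up K\in\base$, which is routine since a bijection of countable sets induces a homeomorphism of the associated Cantor cubes and hence preserves the class $F_{\sigma\delta}$.
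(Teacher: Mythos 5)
Your proof is correct and follows exactly the route the paper intends: the corollary is stated immediately after Theorem \ref{theorem6}, and the paper derives it by combining that theorem (Borel implies membership in $\mathcal{B}$) with the $F_{\sigma\delta}$ bound from Theorem \ref{theorem4}. Your remark about transferring the complexity bound across the isomorphism is a reasonable (and correct) bit of extra care that the paper leaves implicit.
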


\subsection{Borel restrictions of $WO(\Q)$}

In this section we will show a result analogous to Theorem
\ref{theorem6} for the ideal $WO(\Q)$ of the well founded subsets
of $WO(\Q)$. For simplicity, we will write $WO$ instead of
$WO(\Q)$. We first observe that $WO^\perp$ is the ideal of well
founded subsets of $(\Q, <^*)$ where $<^*$ is the reversed order
of $\Q$. In fact, the map $x\mapsto -x$ from $\Q$ onto $\Q$ is an
isomorphism between $WO$ and $WO^\perp$. In particular, $WO$ is a
Fr\'{e}chet ideal.

We recall that  linear order $ (L,<)$ is said to be {\em
scattered}, if it does not contain a order-isomorphic copy of
$\Q$. The main result is the following.

\begin{thr}
\label{restriction of WO} For every $A\su \Q$, the following are
equivalent:
\begin{itemize}
\item[(i)] $A$ is scattered (with the order inherited from $\Q$).
\item[(ii)] $WO\!\!\upharpoonright \!A$ belongs to $\mathcal{B}$.
\item[(iii)]$WO\!\!\upharpoonright \!A$ is Borel.

\item[(iv)] $WO\not\hookrightarrow WO\up A$.

\end{itemize}
\end{thr}

Since $WO$ is a complete co-analytic set (see
\cite[33.2]{Kechris94}) and each ideal in $\base$ is Borel, then
it is clear that $(ii)\Rightarrow (iii)\Rightarrow (iv)$. To see
$(iv)\Rightarrow (i)$, suppose $A\su \Q$ is not scattered. Any
embedding of $(\Q,<)$ inside $(A,<)$ is also an embedding from $WO$
into $WO\up A$. So it only remains to show that $(i)$ implies
$(ii)$. For that end we need to recall a well known result of
Hausdorff about countable scattered orders.

Given a sequence of linear orders $(L_n,<_n)$ over a disjoint
collection of sets $(L_n)_{n\in\N}$, the sum $\sum_{n\in\N} L_n$
is defined as the lexicographical order on $L=\bigcup_n L_n$. That
is to say, for $x,y\in L$, $x <_L y$ iff either $x,y\in L_n$ for
some $n$ and $x<_n y$ or $x\in L_n$ and $y\in L_m$ with $n<m$. The
sum of two (or finitely many) linear orders is defined in a
similar manner. If $L$ is a linear order, then $L^*$ denotes the
reversed order.

We denote by $SC$ the closure of $\{(\N,<)\}$ under the operations
of taking countable or finite sums and reversal of an order. The
collection $SC$ is naturally presented as an increasing union of
of families $SC_\alpha$ with $\alpha<\omega_1$. Where $SC_0$
consists of $\N$,  $\N^*$ and the sums of them $\N+\N^*$ and
$\N^*+\N$. Then $SC_\alpha$ consists of sums of orders of rank
smaller than $\alpha$ and its reversed orders. We say that $L$ has
rank $\alpha$, if $L\in SC_\alpha$ and $L\not\in SC_\beta$ for all
$\beta<\alpha$.

\begin{thr} (Hausdorff \cite{rosenstein82}).
A countable linear order is scattered iff it is isomorphic to an
order in $SC$.
\end{thr}

Notice that if $L\su \Q$, then $(L,<_\Q)^*$ is isomorphic to
$(-L,<_\Q)$ (where $-L=\{-x:\; x\in L\}$). The following simple
observation is the key fact to prove our result.

\begin{lem}
\label{sum=oplus} Let $L\su \Q$.

\begin{itemize}
\item[(i)] If $L$ is order isomorphic to a sum $\sum_{n\in\N}L_n$,
for some disjoint sequence of sets $L_n\su \Q$,  then
$WO\!\!\upharpoonright \!L \cong \bigoplus_n WO\!\!\upharpoonright
\!L_n$.
\item[(ii)] If $L$ is isomorphic to a sum $L_1+L_2$ where $L_1$
and $L_2$ are disjoint subsets of $\Q$, then
$WO\!\!\upharpoonright \!L \cong WO\!\!\upharpoonright \!L_1\oplus
WO\!\!\upharpoonright \!L_2$.
\item[(iii)] $(WO\!\!\upharpoonright \!L)^\perp\cong
WO\!\!\upharpoonright \!{L}^*$.
\end{itemize}
\fin
\end{lem}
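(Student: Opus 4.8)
The plan is to verify each of the three parts by directly unraveling the definitions of $WO\up L$ and of the direct sum. For part (i), suppose $L\su \Q$ is order isomorphic to $\sum_{n\in\N} L_n$ via an isomorphism carrying the subsets $L_n$ onto the summands. The key fact is that a subset $A\su L$ is well founded (as a subset of $\Q$, equivalently of $L$) precisely when $A\cap L_n$ is well founded in $L_n$ for every $n$. Since the blocks $L_n$ are arranged consecutively, any strictly decreasing sequence in $A$ must eventually stay inside a single block $L_n$ (a decreasing sequence cannot move to a later block, and can only move to earlier blocks finitely often), so an infinite descending sequence in $A$ yields one inside some $L_n$; conversely a descending sequence in any single $A\cap L_n$ is already descending in $L$. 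Hence $A\in WO\up L \Leftrightarrow (\forall n)(A\cap L_n\in WO\up L_n)$, which is exactly the definition of $\bigoplus_n WO\up L_n$ under the partition $(L_n)_n$.

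For part (ii), I would argue identically but with the finite partition $\{L_1,L_2\}$; the same observation about where a descending sequence can live shows $A\in WO\up L \Leftrightarrow A\cap L_1\in WO\up L_1 \textrm{ and } A\cap L_2\in WO\up L_2$, giving $WO\up L\cong WO\up L_1\oplus WO\up L_2$. This is really the special case of (i) with only two nonempty summands, so it requires no new idea.

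For part (iii), the relevant remark made just before the ideal $WO$ was discussed is that the map $x\mapsto -x$ is an isomorphism between $WO$ and $WO^\perp$ on $\Q$. More precisely, $B\su \Q$ is well founded for $<_\Q$ iff $-B$ is well founded for the reversed order, and $(WO\up L)^\perp$ is the ideal of subsets of $L$ well founded under the reverse order $<^*$ restricted to $L$. Since $(L,<_\Q)^*$ is order isomorphic to $(-L,<_\Q)$ by $x\mapsto -x$ (as noted in the excerpt), a set is well founded under $<^*$ on $L$ iff its image under this map is well founded under $<_\Q$ on $-L$, i.e. belongs to $WO\up{L^*}$. Thus $(WO\up L)^\perp\cong WO\up{L^*}$.

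The only point requiring genuine care — rather than routine checking — is the claim in part (i) that an infinite descending sequence in $A$ can be trapped inside a single block $L_n$. The argument is that the block index of a descending sequence is nonincreasing, hence eventually constant, so from some point on the sequence lies in a fixed $L_n$ and is still infinite and descending there; I expect this is the main (and only) substantive step, and everything else follows from the definition of the direct sum and the reversal observation.
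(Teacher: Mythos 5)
Your proof is correct and is exactly the routine verification the paper has in mind; indeed the paper states this lemma as a ``simple observation'' and omits the proof entirely. The one substantive point --- that in a sum $\sum_n L_n$ the block index along a strictly decreasing sequence is nonincreasing, hence eventually constant, so well-foundedness reduces to well-foundedness of each $A\cap L_n$ --- is handled correctly, and part (iii) follows as you say from the identity $(WO\up L)^\perp = WO^\perp\up L$ together with the paper's observation that $x\mapsto -x$ witnesses $WO\cong WO^\perp$.
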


\medskip

\noindent {\bf Proof of Theorem \ref{restriction of WO}:} It only
remains to show that $(i)$ implies $(ii)$. This is done by
induction on the scattered order. The base of the induction is
trivial since it is clear that if $L\su\Q$ is order isomorphic to
$\N$, then $WO\!\!\upharpoonright \!L\cong
\mathcal{P}(\mathbb{N})$. The rest follows from Lemma
\ref{sum=oplus}. \fin

\bigskip

\begin{coro}
Neither $I_{wf}$ nor $WO$ is isomorphic to a restriction of the other.
\end{coro}

\proof Recall that $x\mapsto -x$ is an isomorphism between $WO$ and $WO^\perp$. Suppose $WO\cong I_{wf}\up A$. Then $WO\cong WO^\perp \cong I_{wf}^\perp \up A= I_d\up A$, which implies that $WO$ is Borel, as $I_d$ is Borel, and that is a contradiction.

Suppose $ I_{wf}\cong WO\up A$. Then $I_d\cong WO^\perp \up A\cong WO\up (-A)$.  Hence $WO\up (-A)$ is Borel, hence $-A$ is scattered by previous theorem. Thus $A$ is also scattered and thus $WO\up A$ is Borel. Hence $I_{wf}$ is Borel, which is a contradiction.
\fin

\section{Examples of sequential analytic spaces}

As it was explained in the introduction, any ideal can be identified
with a topological space on $X=\N\cup\{\infty\}$ such that the
space is Fr\'{e}chet iff the ideal is Fr\'{e}chet. This idea can be
extended to construct other more complex topological spaces.
Perhaps the most well known  example is Seq (and its variations) which have been studied by several people (see for instance \cite{Franklin68,Louveau72,sirota69,TU}). We will follow the presentation given in
\cite{TU}  where they study a  topology
$\tau_{\mathcal{F}}$ on $\seq$ where ${\mathcal{F}}$ is a filter
over $\N$, such that $(\seq,\tau_{\mathcal{F}})$ is a sequential
space (see the definition below) iff $\mathcal{F}$ is a Fr\'{e}chet
filter (i.e. its dual ideal is Fr\'{e}chet). In fact, they
constructed a family of size bigger than the continuum of
Fr\'{e}chet filters such that the corresponding sequential spaces
$(\seq,\tau_{\mathcal{F}})$ are pairwise non homeomorphic. They
ask if there is an uncountable family of analytic Fr\'{e}chet
filters with the same property. The purpose of this section is to
give a positive answer to that question.

Let us recall that a topological space $X$ is {\em sequential} if
whenever  $A\subseteq X$ is non closed, then there is a sequence
$(x_n)_n$ in $A$ converging to a point not in $A$. Clearly, any
Fr\'{e}chet space is sequential, but the reciprocal is not true.

Let $\mathcal{F}$ be a filter on $\N$ containing the cofinite
sets. Define a topology $\tau_{\mathcal{F}}$ over $\seq$ by
letting a subset $U$ of $\seq$ be open if, and only if, $\{n\in\N
: s^{\smallfrown} n \in U \}\in \mathcal{F}$, for all $s\in U$.
The prototypical sequential space of sequential rank $\omega_1$ is
the well known Arkhangle'ski\v{\i}-Franklin space $Seq$ which
turns out to be homeomorphic to $(\seq,\tau_{\FIN})$. The main
result of this section is that the topological spaces
corresponding to the dual filters of the ideals in $\base$ are
pairwise non-homeomorphic. We need some preliminary results.

\begin{lem} (\cite{TU})\label{lemma8}
Let $\mathcal{F}$ be a filter on $\N$ containing the cofinite
sets. Then
\begin{enumerate}
\item[(i)] $(\seq,\tau_{\mathcal{F}})$ is $T_2$, zero dimensional
and has no isolated points.

\item[(ii)] $(\seq,\tau_{\mathcal{F}})$ is sequential if, and only
if, $\mathcal{F}$ is a Fr\'{e}chet filter.

\item[(iii)] If $(\seq, \tau_{\mathcal{F}})$ is sequential, then
$Seq$ embeds into it as a closed subspace and therefore
$(\seq, \tau_{\mathcal{F}})$ has sequential order $\omega_1$.

\item[(iv)] The space $(\seq, \tau_{\mathcal{F}})$ is homogeneous.

\item[(v)] If $\mathcal{F}$ is Borel, then $\tau_{\mathcal{F}}$ is
Borel (as a subset of $2^{\seq}$).

\end{enumerate}
\end{lem}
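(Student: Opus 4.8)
The plan rests on two computations about $\tau_{\mathcal F}$ that I would establish first. The basic cone $\mathcal N_t$ is clopen: it is open because $\N\in\mathcal F$, and its complement is open because for $u$ with $t\not\preceq u$ the set $\{n: t\not\preceq u^{\frown}n\}$ is either all of $\N$ or cofinite, hence in $\mathcal F$. From the clopen cones, (i) is immediate: comparable points are separated by some $\mathcal N_t$ and its complement and incomparable ones by disjoint cones (Hausdorff); zero-dimensionality follows by refining any open $U\ni t$ to a clopen branching subtree $V\su U$ obtained by keeping, at each node, an $\mathcal F$-set of its $U$-children; and there are no isolated points because every member of $\mathcal F$ is infinite (a proper filter containing the cofinite sets omits all finite sets). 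Writing $\mathcal F^{\ast}$ for the dual ideal, I would then record the convergence criterion (the one-point computation of the introduction applied at $t$): for $S\su\N$ the sequence $(t^{\frown}n)_{n\in S}$ converges to $t$ iff $S\in(\mathcal F^{\ast})^{\perp}$; and the closure criterion $t\in\overline A$ iff $t\in A$ or $\{n: t^{\frown}n\in\overline A\}\notin\mathcal F^{\ast}$, whose nontrivial half is proved by the same fusion: if the set of children in $\overline A$ is $\mathcal F^{\ast}$-small, each bad child carries a clopen neighbourhood missing $A$, and gluing these (with $t$) along an $\mathcal F$-set gives a neighbourhood of $t$ missing $A$.

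For (ii), the direction ``sequential $\Rightarrow$ Fr\'echet'' I would get from a closed subspace: $Y=\{\emptyset\}\cup\{\langle n\rangle: n\in\N\}$ is closed (no sequence of length $\ge 2$ lies in its closure) and, with each $\langle n\rangle$ isolated and $\emptyset$ carrying the trace filter $\mathcal F$, is homeomorphic to the one-point space of $\mathcal F^{\ast}$; since sequentiality passes to closed subspaces and a space with a single non-isolated point is sequential iff its neighbourhood filter is Fr\'echet, $\mathcal F^{\ast}$ is Fr\'echet. For the converse I would show the topological closure is reached by iterated sequential closures. Call $C$ $(\star)$-closed if $\{n: t^{\frown}n\in C\}\notin\mathcal F^{\ast}$ implies $t\in C$; by the fusion above, $(\star)$-closed sets are exactly the closed sets. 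Now let $[A]_{\omega_1}$ be the $\omega_1$-th iterated sequential closure; it is sequentially closed, and it is $(\star)$-closed, because if its children above $t$ form an $\mathcal F^{\ast}$-positive set $R$ then, using that $\mathcal F^{\ast}$ is Fr\'echet, I thin $R$ to an infinite $S\in(\mathcal F^{\ast})^{\perp}$, whence $(t^{\frown}n)_{n\in S}\to t$ exhibits $t$ as a sequential limit of points of $[A]_{\omega_1}$. Hence $[A]_{\omega_1}$ is closed and equals $\overline A$, so the space is sequential (and of sequential order $\le\omega_1$).

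For (iii), fixing an infinite $D\in(\mathcal F^{\ast})^{\perp}$ (which exists because $\mathcal F^{\ast}$ is Fr\'echet and $\N\notin\mathcal F^{\ast}$), the full subtree $D^{<\omega}\su\seq$ is closed (every subtree is closed, since a point off a downward-closed set has a cone missing it) and, because the trace of $\mathcal F$ on $D$ is the cofinite filter, its subspace topology is $\tau_{\FIN}$; thus $D^{<\omega}$ is a closed copy of $S_\omega$, and combining the known fact that $S_\omega$ has sequential order $\omega_1$ with the bound $\le\omega_1$ from (ii) gives sequential order exactly $\omega_1$. Part (v) is routine: openness of $U$ is the countable condition $\bigwedge_{s}\bigl(s\in U\Rightarrow \{n: s^{\frown}n\in U\}\in\mathcal F\bigr)$, and since $U\mapsto\{n: s^{\frown}n\in U\}$ is continuous from $2^{\seq}$ to $2^{\N}$ and $\mathcal F$ is Borel, each conjunct is Borel, so $\tau_{\mathcal F}$ is a countable intersection of Borel sets.

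Part (iv) is where I expect the real work. The self-similarity $\mathcal N_t\cong\seq$ is free (prepending $t$ is an index-preserving tree isomorphism, so it respects $\mathcal F$ with no invariance hypothesis), and it reduces homogeneity to producing, for each child $\langle n\rangle$, an autohomeomorphism sending $\emptyset$ to $\langle n\rangle$; iterating along a branch then moves any point to the root. The natural candidate is the prepend-a-$0$ map $\Phi(w)=\langle 0\rangle^{\frown}w$, a homeomorphism of $\seq$ onto the proper clopen set $\mathcal N_{\langle 0\rangle}$ sending $\emptyset$ to $\langle 0\rangle$. The obstacle is that $\Phi$ is only a self-embedding, and one cannot fix this by permuting children, since $\mathcal F$ need not be invariant under any nontrivial permutation of $\N$; the only moves that automatically preserve $\tau_{\mathcal F}$ are the index-preserving ones. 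I would therefore upgrade $\Phi$ to an autohomeomorphism by an absorption argument along the spine $\langle 0^{k}\rangle$, rerouting the one-sided shift into the off-spine clopen blocks and checking continuity ``at infinity'' (the nested cones $\mathcal N_{\langle 0^{k}\rangle}$ have empty intersection, so no limit point is created). This absorption, together with the Fr\'echet-to-sequential induction in (ii), is the technical heart of the lemma; the remaining parts are bookkeeping on top of the clopen-cone and closure criteria.
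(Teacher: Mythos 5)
The paper offers no proof of this lemma to compare against: it is imported verbatim from \cite{TU}. Judged on its own, your treatment of (i), (ii), (iii) and (v) is correct and essentially complete: the cones $\mathcal{N}_t$ are clopen, the fusion construction gives a clopen base and identifies the closed sets with your $(\star)$-closed sets, the set $\{\emptyset\}\cup\{\langle n\rangle:n\in\N\}$ is a closed copy of the one-point space of $\mathcal{F}^{\ast}$ (giving sequential $\Rightarrow$ Fr\'echet), the $\omega_1$-iterated sequential closure argument gives the converse, the subtree $D^{<\omega}$ for an infinite $D\in(\mathcal{F}^{\ast})^{\perp}$ carries the cofinite trace filter and is a closed copy of $S_\omega$, and the countable-intersection computation for Borelness of $\tau_{\mathcal F}$ is routine.

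The genuine gap is in (iv), and you flag it yourself: ``upgrade $\Phi$ to an autohomeomorphism by an absorption along the spine'' is a plan, not a construction, and as stated it does not obviously produce a bijection --- a one-sided shift of the blocks $B_k=\mathcal{N}_{\langle 0^k\rangle}\setminus\mathcal{N}_{\langle 0^{k+1}\rangle}$ omits $B_0$ from its range however you reroute, so the combinatorics must change, not just the bookkeeping at infinity. The clean fix is an involution rather than a shift. First isolate the continuity criterion your clopen-cone computation already gives: a bijection $h$ of $\seq$ is a homeomorphism provided that for every $s$ one has $h(s^{\frown}n)=h(s)^{\frown}n$ for all but finitely many $n$ (then $\{n:s^{\frown}n\in h^{-1}(V)\}$ differs from $\{n:h(s)^{\frown}n\in V\}$ by a finite set, so openness passes both ways; no invariance of $\mathcal{F}$ is used, only that $\mathcal F$ contains the cofinite sets). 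Now for fixed $k$ let $h$ interchange $\seq\setminus\mathcal{N}_{\langle k\rangle}$ with $\mathcal{N}_{\langle k\rangle}\setminus\mathcal{N}_{\langle k,k\rangle}$ via $w\leftrightarrow\langle k\rangle^{\frown}w$ (for $w=\emptyset$ or $w(0)\neq k$) and act as the identity on the clopen cone $\mathcal{N}_{\langle k,k\rangle}$. This is an involution, hence a bijection, it sends $\emptyset$ to $\langle k\rangle$, and the criterion holds at every node with at most one exceptional child (at $\emptyset$ and at $\langle k\rangle$). Plugged into your (correct) reduction via the canonical homeomorphisms $\seq\cong\mathcal{N}_t$ and extension by the identity off clopen cones, this closes (iv); the rest of your argument stands.
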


We also need the following fact.

\begin{lem}\label{lemma7}
Every ideal $I$ in $\mathcal{B}\setminus \{P_0\}$ is isomorphic to any restriction of itself to a set in its dual filter and $P_0$ is isomorphic to any restriction of itself to an infinite set.
\end{lem}

\begin{proof} The claim about $P_0$ is obvious. Suppose $I$ be an ideal in $\mathcal{B}$ not isomorphic to $P_0$ and $K$  such that $\N\setminus K\in I$. We first treat the case when  $\N\setminus K$ infinite. We have that
\[
I\cong I\up (K \cup  \N\setminus K) \cong I\up K \oplus I \up
(\N\setminus K) \cong I\up K \oplus \, \mathcal{P}(\N){\cong} I\up
K.
\]
The last equivalence follows from Theorem \ref{theorem1}.

Suppose now that $\N\setminus K$ infinite. We will argue by
induction. The basic case is straightforward. Assume that for all $\alpha < \beta$ and for all
cofinite set $L$ with $\N\setminus L \in I$ we have $J\up L \cong J$, whenever
$J\in \{P_{\alpha}, Q_{\alpha}, P_{\alpha} \oplus Q_{\alpha}\}$. Fix a cofinite set $K$ in the dual filter of $I$.

Suppose $I=P_{\beta}$. Consider a sequence of ordinals $(\mu_n)_n$ all less than $\beta$ and consider a partition $(L_n)_n$ of $\N$ such that $I=\bigoplus_{n\in\N} I_n$, where $I\up L_n\cong I_n\cong Q_{\mu_n}$ for every $n\in\N$ (see lemma~\ref{particion-P} and lemma~\ref{sumas-directas}). Since $Q_{\mu_n}\ncong P_0$, we must have that $L_n\notin I_n\cong Q_{\mu_n}$. On the other hand, since $L_n= ((\N\setminus K) \cap L_n)\cup(K\cap L_n)$ and $K$ is cofinite, then it must be the case that $K\cap L_n\notin I_n\cong Q_{\mu_n}$. Then, applying the inductive hypothesis, we get that $I_n\up (K\cap L_n)\cong I_n\cong Q_{\mu_n}$. Therefore,
\[
P_\beta\up K\cong I\up K\cong \bigoplus_{n\in\N} I_n\up(K\cap L_n)\cong \bigoplus_{n\in \N} I_n \cong P_{\beta}.
\]

Suppose now that $I=Q_\beta$. Then $I\up K\cong (I^{\perp}\up K)^{\perp}\cong (P_{\beta}\up K)^{\perp}\cong P_{\beta}^{\perp}\cong Q_{\beta}$.

Finally, suppose $I=I_0\oplus I_2\cong P_\beta \oplus Q_\beta$. Then $I\up K \cong  I_0\up (K\cap A) \oplus I_1 \up (K\cap B)$, where $(A,B)$ is a partition of $\N$ such that  $I\up A\cong I_0\cong P_\beta$ and $I\up B\cong I_1\cong Q_\beta$. Since $\beta\neq 0$ we have that $P_\beta\ncong P_0$; moreover, $Q_\beta\ncong P_0$. Therefore, $A\notin I_0$ and $B\notin I_1$. Since $K$ is cofinite, we have that $K\cap A\notin I_0$ and $K\cap B \notin I_1$. Therefore, $I_0\up (K\cap A)\cong P_{\beta}$ and $I_1 \up (K\cap B)\cong Q_{\beta}$ whence $I\up K\cong I$.
\fin
\end{proof}

\bigskip

We denote by $\mathcal{F}_{\alpha}$ the dual filter of
$P_{\alpha}$, by $\tau_\alpha$ the topology
$\tau_{\mathcal{F}_\alpha}$, and by $\N^{[1]}\su \N^{<\omega}$ the
set of sequences of length $1$.

\begin{pro}
If $\alpha \neq \beta$, then
$(\seq,\tau_\alpha)\ncong(\seq,\tau_\beta)$.
\end{pro}

\begin{proof}
Suppose that $(\N^{<\omega}, \tau_\alpha)\cong (\N^{<\omega},
\tau_\beta)$ and let $h:\N^{<\omega}\rightarrow \N^{<\omega}$ be
an homeomorphism witnessing this fact. By part (iv) of Lemma
\ref{lemma8} we can assume that $h(\emptyset)=\emptyset$. Consider
the sets $A=\{h(\langle n \rangle) : n\in\N \}\cap \N^{[1]}$,
$B=\{s(0):s\in A\}$, and $C=\{h^{-1}(s)(0) : s\in A\}$. Using
Lemma \ref{lemma8}(ii) it is easy to see that $B\in
\mathcal{F}_\beta$, $C\in \mathcal{F}_\alpha$, and
$\mathcal{F}_\alpha \up B \cong \mathcal{F}_\beta \up C$. Thus, by
Lemma \ref{lemma7}, $\mathcal{F}_\alpha\cong\mathcal{F}_\beta$ and
by Lemma \ref{lemma9}, $\alpha=\beta$.


\fin
\end{proof}

The next result gives a  positive answer to question 6.9 of
\cite{TU}.

\begin{coro}\label{coro4}
There is an uncountable family of pairwise non-homeomorphic
analytic sequential spaces of sequential order $\omega_1$.
\end{coro}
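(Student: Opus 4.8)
The plan is to exhibit the explicit family $\{(\seq,\tau_\alpha):\alpha<\omega_1\}$ and to verify, one property at a time, that it witnesses the statement; each of the four required features has essentially been prepared by the preceding results, so the work is mostly a matter of assembling them in the right order. First I would record that this is an uncountable family of pairwise nonhomeomorphic spaces. Indeed, by the preceding proposition we have $(\seq,\tau_\alpha)\ncong(\seq,\tau_\beta)$ whenever $\alpha\neq\beta$, so the $\aleph_1$ spaces indexed by $\alpha<\omega_1$ are genuinely distinct up to homeomorphism, which already gives uncountably many of them.

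Next I would check that each $(\seq,\tau_\alpha)$ is sequential of sequential order $\omega_1$. Since $P_\alpha\in\base$, it is a Fr\'echet ideal (every member of $\base$ is Fr\'echet), so its dual filter $\mathcal{F}_\alpha$ is a Fr\'echet filter. By Lemma \ref{lemma8}(ii) this makes $(\seq,\tau_\alpha)$ sequential, and then Lemma \ref{lemma8}(iii) guarantees that its sequential order is exactly $\omega_1$. Finally, to see that each space is analytic, I would invoke Theorem \ref{theorem4}: it shows $P_\alpha$ is $F_{\sigma\delta}$ and in particular Borel, whence $\mathcal{F}_\alpha$ is Borel; Lemma \ref{lemma8}(v) then yields that the topology $\tau_\alpha$ is a Borel, and therefore analytic, subset of $2^{\seq}$. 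Collecting these observations produces the desired uncountable family of pairwise nonhomeomorphic analytic sequential spaces of sequential order $\omega_1$.

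I do not expect any genuine obstacle here, since the corollary is a bookkeeping consequence of the preceding proposition together with the structural facts packaged in Lemma \ref{lemma8}. The only point that deserves care is the intended meaning of \emph{analytic space}: the space is coded by its topology as a subset of the Cantor cube $2^{\seq}$, and one must confirm that Borelness of the filter $\mathcal{F}_\alpha$ transfers to Borelness of $\tau_\alpha$ as such a subset. That transfer is precisely the content of Lemma \ref{lemma8}(v), so once the coding is made explicit the verification is immediate.
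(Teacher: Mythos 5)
Your proposal is correct and follows exactly the route the paper intends: the corollary is read off from the Proposition (pairwise nonhomeomorphism of the $(\seq,\tau_\alpha)$) together with Lemma \ref{lemma8}(ii),(iii),(v) and the Borelness of the $P_\alpha$ from Theorem \ref{theorem4}. Your extra remark about the coding of ``analytic space'' via $\tau_\alpha\su 2^{\seq}$ is exactly the convention the paper uses.
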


Since the topology of those spaces is analytic (in fact, Borel), then they are homeomorphic to a subspace of $C_p(\N^\N)$ (by Proposition 6.1 of \cite{todoruzca}).

\bigskip

\noindent {\em Acknowledgments:}  We would like to thank the referee for his (her) comments and suggestions which improved the presentation of the results.

\noindent Departamento de Matem\'aticas, Facultad de Ciencias,
Universidad de Los Andes, M\'erida 5101, Venezuela.\\
guevara@ula.ve.\\
\noindent Department of Mathematics, University of Toronto, Toronto, Canada M5S3G3\\
guevara.guevaraparra@mail.utoronto.ca\\

\noindent Escuela de Matem\'aticas, Facultad de Ciencias, Universidad Industrial de
Santander, Ciudad Universitaria, Carrera 27 Calle 9, Bucaramanga,
Santander, A.A. 678, Colombia.\\
cuzcatea@saber.uis.edu.co.

\end{document}